\documentclass[a4paper, 11pt]{amsart}

\usepackage[T1]{fontenc}
\usepackage[utf8]{inputenc}

\usepackage{amssymb}
\usepackage{mathrsfs}

\setlength{\textwidth}{400pt}

\allowdisplaybreaks[2]

\newtheorem{theorem}{Theorem}[section]
\newtheorem{proposition}[theorem]{Proposition}
\newtheorem{lemma}[theorem]{Lemma}
\newtheorem{corollary}[theorem]{Corollary}
\theoremstyle{definition}
\newtheorem{remark}[theorem]{Remark}
\newtheorem{definition}[theorem]{Definition}
\newtheorem{example}[theorem]{Example}
\newtheorem{examples}[theorem]{Examples}

\numberwithin{equation}{section}

\newcommand{\N}{\mathbb N}
\newcommand{\No}{\N_0}
\newcommand{\Z}{\mathbb Z}

\newcommand{\M}{I}
\newcommand{\m}{i}

\newcommand{\BBB}[1]{\mathscr{B}_{#1}}
\newcommand{\BBBi}[1]{\mathscr{B}'_{#1}}
\newcommand{\CCC}[3]{\mathscr{C}_{#1,#2,#3}}
\newcommand{\CCCt}[2]{\mathscr{C}_{#1,#2}}
\newcommand{\DDD}[2]{\mathscr{D}_{#1,#2}}
\newcommand{\bbb}{\textbf{b}}
\newcommand{\ccc}{\textbf{c}}
\newcommand{\ddd}{\textbf{d}}

\newcommand{\bbbi}{\bbb'}

\newcommand{\Y}{V}
\newcommand{\Yo}{U}

\newcommand{\y}{v}
\newcommand{\yo}{u}

\newcommand{\pcp}[3]{#1 \langle #2 \rangle \amalg #1 [#3]}

\newcommand{\h}[1]{\pi_{#1}}
\newcommand{\ho}{\pi_{0}}
\newcommand{\hi}{\pi_{1}}

\newcommand{\e}[1]{\textbf{e}_{#1}}
\newcommand{\lm}[1]{\ell_{#1}}

\newcommand{\Ao}{A_\y}

\newcommand{\skal}[3]{\alpha_{#1, #3}^{#2}}
\newcommand{\skalt}[2]{\alpha_{#2}^{#1}}
\newcommand{\nabs}[2]{M_{#2}^{#1}}

\newcommand{\lam}[3]{\lambda_{#1, #3}^{#2}}
\newcommand{\lama}[3]{\lambda_{#1, #3}^{#2}}
\newcommand{\lamt}[2]{\lambda_{#1}^{#2}}
\newcommand{\mm}[3]{\mu_{#1, #3}^{#2}}
\newcommand{\mmt}[2]{\mu_{#2}^{#1}}

\newcommand{\pol}[4]{h_{#1, #3, #4}^{#2}}

\newcommand{\Ev}[2]{{\rm Ev}_{#1 ; #2}}

\title[Multilinear polynomials]{Multilinear polynomials are surjective on algebras with surjective inner derivations}

\author{Daniel Vitas}
\address{Department of Mathematics, Faculty of Mathematics and Physics,  University of Ljubljana, Slovenia}
\email{daniel.vitas@student.fmf.uni-lj.si}

\thanks{\emph{Mathematics Subject Classification} (2020). 16R99,  16W25}
\keywords{Multilinear noncommutative polynomial, surjective inner derivation,  L'vov-Kaplansky conjecture.}

\begin{document}

\begin{abstract} Let $f(X_1,\dots,X_n)$ be a nonzero multilinear noncommutative polynomial. If $A$ is a unital algebra with a surjective inner derivation, then every element 
in $A$ can be written as $f(a_1,\dots,a_n)$ for some $a_i\in A$.
\end{abstract}

\maketitle

\section{Introduction}

Let $F$ be a field. By $F\langle X_1,\dots,X_n\rangle$ we denote the free algebra in  $X_i$ over $F$; its elements are called   {\em noncommutative polynomials}. 
Let $A$ be any algebra over $F$ and let $f=f(X_1,\dots,X_n)$ be any noncommutative polynomial.  The set
$$f(A)=\{f(a_1,\dots,a_n)\,|\, a_1,\dots,a_n\in A\}$$
 is called the {\em image of $f$}. If $f(A)=A$, then $f$ is said to be {\em surjective on $A$}.

We say that $f$ is {\em multilinear} if it is of the form \[
  f(X_1, \ldots, X_n) = \sum_{\sigma \in S_n} \lambda_{\sigma} X_{\sigma(1)} \ldots X_{\sigma(n)}
\]
for some $\lambda_{\sigma}\in F$. The {\em L'vov-Kaplansky conjecture} states that if $F$ is an infinite field and $A=M_n(F)$ is the algebra of $n\times n$ matrices over $F$, 
then $f(A)$ is a vector space (as a matter of fact, it can only be one of the following four vector spaces: $\{0\}$, the space of scalar matrices $F1$, the space of traceless matrices $[A,A]$, and $A$). In \cite{KBMR},
Kanel-Belov, Malev, and Rowen  confirmed this conjecture for the case where $n=2$ and $F$ is quadratically closed. Since then, there has been a lot of effort by several authors
 to solve the conjecture for larger $n$; however, even the $n=3$ case is currently  only partially solved. Various variations of the problem  have also been proposed and studied. We refer the reader to the recent survey article \cite{RYMKB} for a thorough account of this line of investigation.

The initial idea, from which this paper arose, was to consider 
 the infinite-dimension\-al version of the L'vov-Kaplansky conjecture.  More specifically, if
$V$ is an infinite-dimensional vector space over $F$ and $A={\rm End}_F(V)$, then one can ask whether $f(A)$ is a vector space  for every multilinear polynomial $f$.
We will see that this is indeed true. In fact, if $f\ne 0$ then $f(A)$ is simply equal to $A$, i.e., $f$ is surjective on $A$. We remark that the assumption that $f$ is multilinear is indeed necessary---see \cite[Example 3.16]{B}. 

We will actually prove  that this result, i.e., $f(A)=A$ with $f$  multilinear and nonzero,  holds for 
  a considerably larger class of algebras, which we now introduce.
Recall that a map  of the form $x\mapsto [\y,x]$, where $\y$ is a fixed element in $A$, is called an {\em inner derivation} of $A$ (here,
$[\y,x]$ stands for $\y x-x\y$). We will 
be interested in  algebras $A$ that have  a {\em surjective inner derivation}. 
That is, there exists a $\y \in A$ 
such that, for every $y \in A$, there is an $x\in A$ satisfying $[\y,x] = y$.

As inner derivations  have nontrivial kernels, no finite-dimensional algebra has a surjective inner derivation.
Moreover,  the same is true for any PI-algebra $A$, since $[A,A]$ is always a proper subset of $A$ \cite{KB}.
 Nevertheless, the class of algebras  with surjective inner derivations
 is fairly large, as we will now show. 

\begin{examples}
1. Let $V$ be an infinite-dimensional vector space over a division algebra $D$ (over any field $F$), and let $A={\rm End}_D(V)$. Let $\{ e_{\m, n} \mid (\m,n)\in \M \times \N \}$ be a basis of
$V$ (here we used the standard fact that every infinite set $\M$ has the same cardinality as $\M \times \N$)
 and let $\y \in A$ be given by $\y(e_{\m,n})= e_{\m, n+1}$.
Take $y\in A$. Then $x\in A$  defined by $x(e_{\m,1})=0$ and
$$x(e_{\m,n}) = -\y^{n-2}y(e_{\m,1}) - \y^{n-3}y(e_{\m,2}) -\dots - \y y(e_{\m,n-2}) - y(e_{\m,n-1}),\,\,\,n\ge 2,$$
satisfies $[\y,x] =y$. Thus,  $A$ is an algebra with a surjective inner derivation.

 2. Let  char$(F)=0$.
Suppose an algebra $A$ is generated by a pair of elements $\y,w$ satisfying $[\y,w]=1$ together with the elements that commute with both $\y$ and $w$.
It is easy to see that  every element in $A$
 is then a linear combination of elements of the form $\y^k w^\ell t$ with $k,\ell\ge 0$ and $[t,\y]=[t,w]=0$. Since $[\y,\frac{1}{\ell+1}\y ^{k}w^{\ell+1} t]= \y^k w^\ell t$, it follows that $ [\y,A]=A$, so $A$ is an algebra with a surjective inner derivation. Important concrete examples of such algebras $A$ are   Weyl algebras $A_n(F)$, $n\in \mathbb N$.

3. From the paper by Cohn \cite{C} it is evident that there exist division algebras with surjective inner derivations. 

4. The direct product of any family, finite or infinite, of algebras with surjective inner derivations  is an algebra with a surjective inner derivation.

5. A homomorphic image of an algebra  with a surjective inner derivation is again an algebra with a surjective inner derivation.

6. If $A$ is an algebra with a surjective inner derivation and $B$ is any unital algebra, then $A\otimes B$ is an algebra with a surjective inner derivation. Indeed, if $x\mapsto [\y,x]$ is a surjective inner derivation of $A$,
then $x\mapsto [\y\otimes 1,x]$ is a surjective inner derivation of $A\otimes B$.
\end{examples}
We will establish the following theorem.

\begin{theorem}\label{t}
If $A$ is a unital algebra with a surjective inner derivation, then  every nonzero multilinear polynomial is surjective on $A$. 
\end{theorem}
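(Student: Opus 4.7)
The plan is to induct on the number of variables $n$ of $f$. Let $d \colon A \to A$ be the surjective inner derivation $d(x) = [v,x]$ for the fixed $v \in A$; note that, since $d$ is surjective and $1 \in A$, there exists $u \in A$ with $[v,u] = 1$, a fact I expect to use crucially.

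The base case $n = 1$ is immediate: a nonzero $f(X_1) = \lambda X_1$ satisfies $f(A) = \lambda A = A$. For the inductive step, for each $i \in \{1,\ldots,n\}$ let
$$g_i(X_1,\ldots,\widehat{X_i},\ldots,X_n) := f(X_1,\ldots,X_{i-1},1,X_{i+1},\ldots,X_n),$$
a multilinear polynomial in $n-1$ variables. If $g_i \neq 0$ for some $i$, the induction hypothesis gives $g_i(A^{n-1}) = A$, and since $g_i(A^{n-1}) \subseteq f(A)$, we are done.

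The hard case is when $g_i = 0$ for every $i$; this forces $f$ to be a multilinear Lie polynomial, i.e.\ to lie in the free Lie subalgebra of $F\langle X_1,\ldots,X_n\rangle$. Here I would substitute $X_1 = v$ and study $h(X_2,\ldots,X_n) := f(v,X_2,\ldots,X_n)$. Using $g_1 = 0$ together with the identity $vY = Yv + [v,Y]$ and an Abel-summation argument that pushes $v$ across each monomial, one rewrites
$$h(X_2,\ldots,X_n) = \sum_{\tau,\,j} \nu_{j,\tau}\, X_{\tau(2)}\cdots X_{\tau(j)}\,[v, X_{\tau(j+1)}]\,X_{\tau(j+2)}\cdots X_{\tau(n)},$$
a sum of monomials each containing exactly one commutator $[v,X_j]$. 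Since $d$ is surjective, each individual $[v,X_j]$ can take any value in $A$.

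The main obstacle is converting this pointwise surjectivity of the inner commutators into genuine surjectivity of $h$ on $A^{n-1}$: the same $X_j$ appears in several monomials in different positions, sometimes inside and sometimes outside the bracket, so one cannot freely prescribe each commutator in isolation. I expect the resolution to go through either an auxiliary induction (noting that $h$ inherits a ``proper'' structure and iterating the same reduction, with $u$ available when the pure commutator identities are not enough), or a direct substitution argument: expressing $f$ in a Dynkin basis of left-normed Lie monomials $[X_{\pi(1)},[X_{\pi(2)},\ldots,[X_{\pi(n-1)},X_n]\ldots]]$ and observing that, under the substitution ``all variables equal to $v$ except one surviving $X_k$,'' each such monomial collapses to $\pm d^{n-1}(X_k)$, so that the surjectivity of $d^{n-1}$ (a consequence of that of $d$) delivers every target element, provided the relevant coefficient sum is nonzero --- if it vanishes, one iterates with a different choice of distinguished variable, or falls back on the Abel-summation reduction above.
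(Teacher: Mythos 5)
Your outline---induct on $n$, close the easy case by setting one variable to $1$, then exploit the surjectivity of $d(x)=[v,x]$ via a substitution $X_1 = v$---does track the paper's strategy in broad strokes; your ``easy case'' is essentially the $k=0$ instance of the paper's Lemma~\ref{x v y}, where one substitutes $X_n \mapsto z_0 u = u$ with $u\in F[v]$. But the hard case is not resolved, and there is an error en route.

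First, $g_i = 0$ for all $i$ does \emph{not} force $f$ to be a Lie polynomial; it only forces $f$ to be a \emph{proper} polynomial, i.e.\ a linear combination of \emph{products} of commutators in which each variable occurs once. For $n\geq 4$ this is strictly larger than the Lie part: $f=[X_1,X_2][X_3,X_4]+[X_3,X_4][X_1,X_2]$ has all $g_i=0$ but is not a Lie element. This already derails the Dynkin-basis collapse you propose, since under the substitution ``all variables $=v$ except $X_k$'' a factor such as $[v,v]$ makes the whole monomial vanish, so no $\pm d^{\,n-1}(X_k)$ term is produced for such $f$. Second, and more decisively, you correctly identify the crux---the same variable occurs inside $[v,\cdot]$ in some monomials and outside it in others, so the bracket values cannot be prescribed independently---but neither of your two proposed fixes is carried out, and the example above shows that neither works in the form stated. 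This is exactly where the paper does its real work, and it requires machinery your sketch does not anticipate: the paper enlarges the ambient space to the coproduct $F\langle X_1,\ldots,X_n\rangle \amalg F[U,V]$ so that an auxiliary element $u\in F[v]$ can be tracked through the induction as a formal variable $U$; it reduces the number of $X$-variables via the substitution $X_n\mapsto z_k u$, where $z_k$ satisfies $[v,\ldots,v,z_k]=1$ (a $k$-fold bracket) and $k$ is the minimal exponent occurring on $X_n$ (Lemmas~\ref{x v y} and~\ref{korak}); and, to handle the ``type two'' polynomials that this produces, it applies the homomorphisms $\pi_k$ (replacing $U$ by $V^k$) for all $k$ and shows via Lemma~\ref{normalna oblika} that the coefficients must satisfy an infinite system of linear equations that admits only the trivial solution (Proposition~\ref{grozna trditev}, proved by a Gaussian-elimination argument). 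Your proposal contains no analogue of this linear-algebraic closure, and without it there is no reason the iterated Abel-summation should terminate or exhaust the coefficients of $f$.
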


Let us point out the two most prominent examples to which this theorem applies.

\begin{corollary}\label{cend}
Let $V$ be an infinite-dimensional vector space over a division algebra $D$ (over any field $F$). Then 
 every nonzero multilinear polynomial is surjective on the algebra ${\rm End}_D(V)$.
\end{corollary}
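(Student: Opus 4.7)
The plan is to realize this corollary as an immediate application of Theorem~\ref{t}. Since $\mathrm{End}_D(V)$ is a unital $F$-algebra (with identity the identity map on $V$), the only thing left to verify is that it admits a surjective inner derivation; once that is established, Theorem~\ref{t} delivers the conclusion verbatim for every nonzero multilinear polynomial $f$.

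For the existence of the surjective inner derivation, I would simply invoke the construction spelled out in Example~1 of the introduction, which handles exactly this setting. Using the cardinal-arithmetic fact that any infinite set $I$ has the same cardinality as $I \times \mathbb{N}$, one chooses a $D$-basis $\{e_{i,n}\}_{(i,n)\in I\times\mathbb{N}}$ of $V$ and defines the shift operator $v \in \mathrm{End}_D(V)$ by $v(e_{i,n}) = e_{i,n+1}$. Given an arbitrary target $y \in \mathrm{End}_D(V)$, one defines $x \in \mathrm{End}_D(V)$ on basis vectors by $x(e_{i,1}) = 0$ and, for $n \geq 2$, by the telescoping prescription
\[
  x(e_{i,n}) = -v^{n-2} y(e_{i,1}) - v^{n-3} y(e_{i,2}) - \dots - v\, y(e_{i,n-2}) - y(e_{i,n-1}).
\]
A direct evaluation of $[v,x]$ on each $e_{i,n}$ then produces $y(e_{i,n})$, so $[v,x] = y$, and the inner derivation $x \mapsto [v,x]$ is surjective.

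The only real work in the argument is the basis choice---engineering an explicit $\mathbb{N}$-direction inside $V$---and that is already taken care of by the cardinality identity $|I \times \mathbb{N}| = |I|$ for infinite $I$. Once the shift $v$ is in place, there is no further obstacle: Theorem~\ref{t} applies and yields $f(\mathrm{End}_D(V)) = \mathrm{End}_D(V)$ for every nonzero multilinear $f$, which is the statement of the corollary.
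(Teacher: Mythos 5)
Your proposal is correct and follows exactly the path the paper intends: Example~1 of the introduction furnishes the shift operator $v$ and the explicit preimage $x$ witnessing that $\mathrm{End}_D(V)$ has a surjective inner derivation, and Corollary~\ref{cend} then drops out of Theorem~\ref{t}. Nothing is missing, and the telescoping verification of $[v,x]=y$ is the same computation the paper relies on.
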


\begin{corollary}\label{cweyl}
 Every nonzero multilinear polynomial is surjective  on the $n$th Weyl algebra $A_n(F)$  (here, $F$ is any field with characteristic $0$). 
\end{corollary}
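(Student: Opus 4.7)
The plan is to deduce the corollary directly from Theorem~\ref{t}. The Weyl algebra $A_n(F)$ is unital, so it suffices to exhibit a surjective inner derivation on it. This is precisely what the final sentence of Example~2 asserts; my job is simply to make the application transparent by writing out the verification.

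Concretely, let $p_1, q_1, \dots, p_n, q_n$ denote the standard generators of $A_n(F)$, subject to the canonical commutation relations $[p_i, q_j] = \delta_{ij}$ and $[p_i, p_j] = [q_i, q_j] = 0$. Set $\y = p_1$ and $w = q_1$, so $[\y, w] = 1$, and observe that $p_2, q_2, \dots, p_n, q_n$ all commute with both $\y$ and $w$. Hence $A_n(F)$ is generated by $\y$ and $w$ together with the centralizer $C$ of $\{\y, w\}$ (which contains the copy of $A_{n-1}(F)$ spanned by the remaining generators), so the hypothesis of Example~2 is met.

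By the standard PBW-type normal form for the Weyl algebra, every element of $A_n(F)$ is a linear combination of monomials of the shape $\y^k w^\ell t$ with $k, \ell \ge 0$ and $t \in C$. Using $[\y, w^{\ell+1}] = (\ell+1) w^\ell$ together with $[\y, t] = 0$, one computes $\bigl[\y,\tfrac{1}{\ell+1}\y^k w^{\ell+1} t\bigr] = \y^k w^\ell t$, so each such monomial lies in $[\y, A_n(F)]$. Dividing by $\ell+1$ is where $\mathrm{char}(F) = 0$ enters essentially. It follows that $\mathrm{ad}(\y)$ is a surjective inner derivation, and Theorem~\ref{t} applied to $A = A_n(F)$ then gives $f(A_n(F)) = A_n(F)$ for every nonzero multilinear $f$.

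The only subtle point is the characteristic zero hypothesis: it is genuinely unavoidable, because in positive characteristic $A_n(F)$ is a PI-algebra, and as noted in the paragraph preceding the Examples block, no PI-algebra can carry a surjective inner derivation. Beyond this, there is no real obstacle---the corollary is essentially a direct specialization of Theorem~\ref{t} via Example~2, with the bulk of the work already done in proving the general theorem.
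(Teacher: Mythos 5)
Your proof is correct and follows exactly the route the paper intends: the corollary is a direct specialization of Theorem~\ref{t}, with the surjectivity of ${\rm ad}(p_1)$ on $A_n(F)$ verified precisely as in Example~2 (the normal form $\y^k w^\ell t$ and the identity $[\y,\tfrac{1}{\ell+1}\y^k w^{\ell+1}t]=\y^k w^\ell t$). Your added remark that characteristic $0$ is unavoidable because $A_n(F)$ is a PI-algebra in positive characteristic is a correct and apt observation, consistent with the paper's discussion preceding the Examples.
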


We will actually prove a more general result, Theorem \ref{mt}, which involves somewhat more general polynomials
(which we call admissible partially commutative polynomials) and requires less than the surjectivity of a derivation. What follows is devoted to its proof.
More precisely, in Section \ref{s2} we provide an appropriate setting for our problem, in Section \ref{s3} we consider a special system of linear equations that occurs in the proof, and in Section \ref{s4} we give the proof of the main result.

\section{Admissible partially commutative polynomials} \label{s2}

Let $F$ be a field. By
\[
 \pcp{F}{X_1,\ldots,X_n}{\Yo,\Y}
\]
we denote the coproduct  (see, e.g., \cite[Section 1.4]{Rings}) of the free algebra $F \langle X_1, \ldots, X_n \rangle$, i.e., the algebra of polynomials in noncommuting  variables $X_i$, and 
 $ F[\Yo, \Y]$, the algebra of polynomials in two commuting  variables $\Yo$ and $\Y$.
We will refer to the elements of this coproduct algebra as {\em partially commutative polynomials}. We may think of them as polynomials in $X_i,\Yo,\Y$ where the variables $\Yo$ and $\Y$ commute among themselves, but do not commute with the variables $X_i$.

Let $A$ be a unital algebra over $F$ and let $\y$ be a fixed element in $A$. In Section \ref{s4} we will impose some conditions on $\y$, but at this stage it can be any element. 
By $F[\y]$ we denote the (unital) subalgebra of $A$ generated by $\y$. Take 
any $x_1,\dots,x_n\in A$ and $\yo \in F[\y]$, and let 
\[
  \Ev{x_1, \ldots, x_n}{\yo}: \pcp{F}{X_1,\ldots,X_n}{\Yo,\Y} \rightarrow A \text{}
\]
be the algebra homomorphism sending $X_i$ to $x_i$, $\Yo$ to $\yo$, and $\Y$ to $\y$. Note that $\Ev{x_1, \ldots, x_n}{\yo}$ extends the
usual evaluation homomorphisms on $F \langle X_1, \ldots, X_n \rangle$ and $F[\Yo, \Y]$, respectively, so its existence follows from the standard properties of
the coproduct. We define the {\em image of a partially commutative polynomial} $$f\in \pcp{F}{X_1,\ldots,X_n}{\Yo,\Y}$$ as
\[
  f(A;\y) = \left\{ \Ev{x_1, \ldots, x_n}{\yo}(f) \,\, \middle\vert \,\, x_1,\ldots, x_n \in A,~\yo \in F[\y] \right\} \text{.}
\]
If $f$ does not involve the variable $\Yo$, we will  write $$\Ev{x_1, \ldots, x_n}{}(f)$$
 instead of $\Ev{x_1, \ldots, x_n}{\yo}(f)$.
Of course, if $f$ is a noncommutative polynomial, i.e,  an element of  the subalgebra $F \langle X_1, \ldots, X_n\rangle$ of $ F \langle X_1, \ldots, X_n \rangle \amalg F[\Yo, \Y]$, then
$f(A)=f(A;\y)$. (We actually will not deal with  images of partially commutative polynomials until Section \ref{s4}, but 
giving this definition at this early stage  may help the reader to understand the context.)

The reason for introducing partially commutative polynomials is the method of our proof.  In order to get closer to ``pure'' noncommutative polynomials (i.e., elements  of
$ F \langle X_1, \ldots, X_n\rangle$) in which we are primarily interested, we will, in
 the course of proof, often substitute  $\Y^k$ for $\Yo$. Let 
\[
  \h{k}: \pcp{F}{X_1,\ldots,X_n}{\Yo,\Y} \rightarrow  \pcp{F}{X_1,\ldots,X_n}{\Yo, \Y}
\]
be the  homomorphism that fixes each $X_i$ and $\Y$, and sends $\Yo$ to  $\Y^k$.
A routine proof shows that
\begin{equation}\label{gg}
   \h{k}(f)(A;\y) \subseteq f(A; \y) 
\end{equation}
for every  $f \in \pcp{F}{X_1,\ldots,X_n}{\Yo,\Y}$ and every $k\in \mathbb N_0 =\N\cup  \{0\}$.

We need some more notation. First of all, we will write
\[
  [x_1, x_2, \ldots, x_n] = [x_1, [x_2, \ldots, x_n]] 
\]
and 
$$
[x_1,x_2]_k =   [\underbrace{x_1, \ldots, x_1}_k, x_2] \text{.}
$$

For any $r \in \No$, we define
\[
  \BBB{r} = \left\{ \bbb = (\bbb_1,\ldots, \bbb_n) \in \No^n \,\, \middle\vert \,\, \sum_{i=1}^n \bbb_i = r \right\} \text{,}
\]
and for any 
$\bbb \in \BBB{r}$ we define
\[
  X_j^{\bbb} = [\underbrace{\Y, \ldots, \Y}_{\bbb_j}, X_j] = [\Y,X_j]_{\bbb_j}\text{;}
\]
if $\bbb_j=0$,  it should be understood that $  X_j^{\bbb} = X_j$.
For $i \in \{1, \ldots, n \}$, we define
\[
  X_j^{\bbb, i} = \begin{cases} [\Yo, X_j^{\bbb}],~j = i \\ X_j^{\bbb},~j \neq i \end{cases} \text{.}
\]
We extend both definitions by setting
\begin{align*}
   \left( X_{j_1} X_{j_2} \ldots X_{j_k}\right)^{\bbb} &= X_{j_1}^{\bbb} X_{j_2}^{\bbb} \ldots  X_{j_k}^{\bbb} \text{,}\\
   \left( X_{j_1} X_{j_2} \ldots X_{j_k}\right)^{\bbb, i} &= X_{j_1}^{\bbb, i} X_{j_2}^{\bbb, i} \ldots  X_{j_k}^{\bbb, i} \text{,}
\end{align*}
for all $j_1,\ldots,j_k \in \{1,\ldots, n\}$.

We are now in a position to define the polynomials that will play a central role in this paper.

\begin{definition}
A partially commutative polynomial 
\[
  f \in \pcp{F}{X_1,\ldots,X_n}{\Yo,\Y} 
\]
is said to be {\em admissible}
if there exists an $r\in\N_0$ (which we call the {\em order of $f$}) such that $f$ is either of the form
\begin{equation}\label{tone}
  f = \sum_{\sigma \in S_n} \sum_{\bbb \in \BBB{r}}\lamt{\sigma}{\bbb} \left( X_{\sigma(1)} X_{\sigma(2)} \ldots X_{\sigma(n)} \right)^{\bbb}
\end{equation}
for some  $\lamt{\sigma}{\bbb}\in F$ or of the form 
\begin{equation}\label{ttwo}
  f = \sum_{\sigma \in S_n} \sum_{\bbb \in \BBB{r}} \sum_{i=1}^n \lam{\sigma}{\bbb}{i} \left( X_{\sigma(1)} X_{\sigma(2)} \ldots X_{\sigma(n)} \right)^{\bbb, \sigma(i)}
\end{equation}
for some  $\lam{\sigma}{\bbb}{i}\in F$. If \eqref{tone} holds, then $f$ is said to be of {\em type one},
and if  \eqref{ttwo} holds, then $f$ is said to be of {\em type two}. 
\end{definition}

\begin{remark} \label{opomba21}
In the case where  $r=0$,  $\BBB{r}$  consists only of the sequence of zeros $\bbb$.
Therefore, in this case we have
\[
  \left( X_{\sigma(1)} X_{\sigma(2)} \ldots X_{\sigma(n)} \right)^{\bbb} = X_{\sigma(1)} X_{\sigma(2)} \ldots X_{\sigma(n)}.
\]
Thus, admissible partially commutative polynomials of type one and of order $0$ are of the form
\[
  f = \sum_{\sigma \in S_n} \lambda_{\sigma} X_{\sigma(1)} X_{\sigma(2)} \ldots X_{\sigma(n)},
\]
so these are exactly the multilinear noncommutative  polynomials. This is of crucial importance for us: any result on admissible
partially commutative polynomials yields a result on multilinear noncommutative  polynomials.
\end{remark}

To gain some feeling for the notions we have defined, we record a simple example.

\begin{example} \label{primer pzv}
Let us describe admissible
partially commutative polynomials in $\pcp{F}{X_1, X_2}{\Yo, \Y}$ of order $1$.
We have
\begin{align*}
  \BBB{1} = \{ (1, 0),~(0, 1) \} \text{.}
\end{align*}
Note that
\begin{align*}
  &\left( X_{1} X_{2} \right)^{(1,0)} = [\Y, X_{1}] X_{2}\text{,}  \quad\,\,\,\,\,    \left( X_{2} X_{1} \right)^{(1,0)} =X_{2} [\Y, X_{1}] \text{,} \\
  &\left( X_{1} X_{2} \right)^{(0,1)} = X_{1} [\Y, X_{2}]\text{,} \quad\,\,\,\,\,  \left( X_{2} X_{1} \right)^{(0,1)} = [\Y, X_{2}] X_{1}\text{,}
\end{align*}
and thus
\begin{align*}
  &\left( X_{1} X_{2} \right)^{(1,0), 1} = [\Yo, \Y, X_{1}] X_{2}\text{,}  \quad\,\,\,\,\,    \left( X_{2} X_{1} \right)^{(1,0), 1} =X_{2} [\Yo, \Y, X_{1}] \text{,} \\
  &\left( X_{1} X_{2} \right)^{(0,1), 1} = [\Yo, X_{1}] [\Y, X_{2}]\text{,} \quad\,\,  \left( X_{2} X_{1} \right)^{(0,1), 1} = [\Y, X_{2}] [\Yo, X_{1}]\text{,}\\
  &\left( X_{1} X_{2} \right)^{(1,0), 2} = [\Y, X_{1}] [\Yo, X_{2}]\text{,} \quad \,\, \left( X_{2} X_{1} \right)^{(1,0), 2} = [\Yo, X_{2}] [\Y, X_{1}] \text{,}\\
  &\left( X_{1} X_{2} \right)^{(0,1), 2} = X_{1} [\Yo, \Y, X_{2}]\text{,}         \quad \,\,\,\,\, \left( X_{2} X_{1} \right)^{(0,1), 2} = [\Yo, \Y, X_{2}] X_{1} \text{.}  
\end{align*}
Hence, admissible polynomials of type two are of the form
\begin{align*}
  f &=  \lam{{\rm id}}{(1,0)}{1} [\Yo, \Y, X_{1}] X_{2} + \lam{(12)}{ (1,0)}{2} X_{2} [\Yo, \Y, X_{1}] \\
  &+  \lam{{\rm id}}{(0,1)}{1} [\Yo, X_{1}] [\Y, X_{2}] + \lam{(12)}{ (0,1)}{2} [\Y, X_{2}] [\Yo, X_{1}] \\
  &+  \lam{{\rm id}}{(1,0)}{2} [\Y, X_{1}] [\Yo, X_{2}] + \lam{(12)}{ (1,0)}{1} [\Yo, X_{2}] [\Y, X_{1}] \\
  &+  \lam{{\rm id}}{(0,1)}{2} X_{1} [\Yo, \Y, X_{2}] + \lam{(12)}{ (0,1)}{1} [\Yo, \Y, X_{2}] X_{1}\text{,} 
\end{align*}
while  admissible polynomials of type one are of the form
\begin{align*}
  f &=  \lamt{{\rm id}}{(1,0)} [\Y, X_{1}] X_{2} + \lamt{(12)}{(1,0)} X_{2} [\Y, X_{1}] \\
  &+  \lamt{{\rm id}}{(0,1)} X_{1} [\Y, X_{2}] + \lamt{(12)}{(0,1)} [\Y, X_{2}] X_{1} \text{.}
\end{align*}
\end{example}

By definition, the vector space of admissible
partially commutative polynomials 
of type two (resp. type one) 
is linearly spanned by the polynomials $ \left( X_{\sigma(1)}  \ldots X_{\sigma(n)} \right)^{\bbb, \sigma(i)}$ (resp.  $\left( X_{\sigma(1)} \ldots X_{\sigma(n)} \right)^{\bbb}$).
Our goal now is to show that they are linearly independent and so they actually form a basis. To this end, we define
\[
  B_1 = \left\{ X_{j_1}\ldots X_{j_d} \,\, \middle\vert \,\, d \in \N,~j_1,\ldots,j_d \in \{1,\ldots, n\} \right\}
\]
and
\[
  B_2 = \left\{ \Yo^k \Y^l \,\, \middle\vert \,\, k, l \in \No,~k + l >0 \right\} \text{.}
\]
Of course, $B_1\cup \{1\}$ is the standard basis of $F\langle X_1,\ldots, X_n \rangle$ and 
$B_2\cup \{1\}$ is the standard basis of $F[\Yo, \Y]$. Letting $M$ be the set of all alternating monomials from $B_1$ and $B_2$,
we see from \cite[Lemma 1.4.5]{Rings}
that $M\cup \{1\}$ is  a basis of the vector space 
\[
    \pcp{F}{X_1,\ldots,X_n}{\Yo,\Y}
\]
of partially commutative polynomials. Using this, it is easy to see that the eight (resp. four) polynomials occurring in Example \ref{primer pzv}
are linearly independent. We now proceed to the general case.

\begin{proposition}
\label{lin neodvisnost}
For any  $n \in \N$ and $r \in \No$,
\[
  \left\{ \left( X_{\sigma(1)} X_{\sigma(2)} \ldots X_{\sigma(n)} \right)^{\bbb} \,\, \middle\vert \,\, \sigma \in S_n, ~\bbb \in \BBB{r} \right\}
\]
and
\[
  \left\{ \left( X_{\sigma(1)} X_{\sigma(2)} \ldots X_{\sigma(n)} \right)^{\bbb, \sigma(i)} \,\, \middle\vert \,\, \sigma \in S_n,  ~\bbb \in \BBB{r},~ i =1,\ldots,n \right\}
\]
are linearly independent sets.
\end{proposition}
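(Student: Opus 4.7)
My plan is to expand each of the listed polynomials in the basis $M \cup \{1\}$ of alternating monomials in $B_1$ and $B_2$ recalled just above, and to exhibit a triangular system of coefficients with respect to a suitable order. The first observation is that every basis monomial occurring in $(X_{\sigma(1)} \ldots X_{\sigma(n)})^\bbb$ or $(X_{\sigma(1)} \ldots X_{\sigma(n)})^{\bbb, \sigma(i)}$ contains the letters $X_1, \ldots, X_n$ exactly once, in the order prescribed by $\sigma$; hence the supports for different $\sigma$'s are disjoint and it suffices to prove linear independence within each $\sigma$-sector.

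For type one, I expand $X_j^{\bbb} = \sum_{k=0}^{\bbb_j}(-1)^k \binom{\bbb_j}{k} \Y^{\bbb_j - k} X_j \Y^k$ to see that every basis monomial appearing in $(X_{\sigma(1)} \ldots X_{\sigma(n)})^\bbb$ has the form $\Y^{a_0} X_{\sigma(1)} \Y^{a_1} \ldots X_{\sigma(n)} \Y^{a_n}$ with $\sum a_j = r$, and that its coefficient is determined by the linear system $\bbb_{\sigma(1)} - k_1 = a_0$, $k_j + \bbb_{\sigma(j+1)} - k_{j+1} = a_j$, $k_n = a_n$ in the $k$-indices, which admits at most one solution. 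Let $\phi_{\sigma, \bbb}$ denote the monomial with $(a_0, a_1, \ldots, a_n) = (0, \bbb_{\sigma(1)}, \ldots, \bbb_{\sigma(n)})$. A direct calculation shows that the coefficient of $\phi_{\sigma, \bbb}$ in $(X_{\sigma(1)} \ldots X_{\sigma(n)})^{\bbb'}$ vanishes unless $\sum_{i \le j} \bbb'_{\sigma(i)} \le \sum_{i \le j} \bbb_{\sigma(i)}$ for every $j \in \{1, \ldots, n-1\}$, and equals $(-1)^r$ when $\bbb' = \bbb$. Ordering $\BBB{r}$ by any linear extension of this partial order produces a triangular coefficient matrix with nonzero diagonal, which settles the type-one case.

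For type two, the expansion $[\Yo, X_{\sigma(i)}^\bbb] = \Yo X_{\sigma(i)}^\bbb - X_{\sigma(i)}^\bbb \Yo$ places $\Yo$ in one of the two $\Y$-blocks flanking $X_{\sigma(i)}$, so a basis monomial with $\Yo$ in position $\ell \in \{0, \ldots, n\}$ (the block between $X_{\sigma(\ell)}$ and $X_{\sigma(\ell+1)}$) receives contributions only from $i = \ell+1$ (via the first summand, with sign $+$) and from $i = \ell$ (via the second, with sign $-$). The key point is that both contributions arise from the single expression $(X_{\sigma(1)}^\bbb \ldots X_{\sigma(\ell)}^\bbb)\, \Yo\, (X_{\sigma(\ell+1)}^\bbb \ldots X_{\sigma(n)}^\bbb)$, whose expansion is, under the obvious bijection ``delete $\Yo$'' between basis monomials with $\Yo$ in this position and the type-one basis monomials, identical to that of $(X_{\sigma(1)} \ldots X_{\sigma(n)})^\bbb$. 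Consequently, the vanishing of the coefficient of such a basis monomial in $\sum_{\bbb, i} c_{\bbb, i} (X_{\sigma(1)} \ldots X_{\sigma(n)})^{\bbb, \sigma(i)}$ reduces to a type-one vanishing condition with coefficients $c_{\bbb, \ell+1} - c_{\bbb, \ell}$ in place of $c_\bbb$. Invoking the type-one case yields $c_{\bbb, \ell+1} = c_{\bbb, \ell}$ for all $1 \le \ell \le n-1$, while the boundary $\ell = 0$ (only $i = 1$ contributes) forces $c_{\bbb, 1} = 0$, and induction on $i$ finishes the proof. The main technical point is verifying the triangular structure for type one, i.e., checking that the partial-sum inequality above is exactly what is needed for the $k$-indices to lie in the range $0 \le k_j \le \bbb'_{\sigma(j)}$ that makes the binomial coefficients nonzero.
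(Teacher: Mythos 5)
Your proof is correct, and it is genuinely more detailed than the paper's, which only fixes a lexicographic-type total order on the pairs $(\bbb,i)$, takes a maximal element, and asserts (omitting the ``tedious but elementary'' rewriting) that a contradiction follows. Both proofs begin with the same observation that the expansions for distinct $\sigma$ have disjoint monomial supports, so one may work with $\sigma={\rm id}$. From there the strategies diverge. Your type-one argument expands $X_j^{\bbb}=\sum_{k=0}^{\bbb_j}(-1)^k\binom{\bbb_j}{k}\Y^{\bbb_j-k}X_j\Y^k$, parametrizes the monomials by $(a_0,\dots,a_n)$, and shows that the coefficient of the designated monomial $\phi_{\sigma,\bbb}$ (the one with $a_0=0$ and $a_j=\bbb_{\sigma(j)}$) in $(\,\cdot\,)^{\bbb'}$ is governed by the unique solution $k_j=\sum_{i\le j}\bbb'_{\sigma(i)}-\sum_{i<j}\bbb_{\sigma(i)}$; the requirement $k_j\le\bbb'_{\sigma(j)}$ is exactly your partial-sum dominance condition, and at $\bbb'=\bbb$ one gets $k_j=\bbb_{\sigma(j)}$ with diagonal coefficient $(-1)^r\neq 0$, so any linear extension of that dominance order yields a triangular system; I checked these computations and they are right (the further requirement $k_j\ge 0$ only makes more coefficients vanish and does not threaten the diagonal). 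Your type-two argument is a genuine departure from the paper's combined ordering on $(\bbb,i)$: by noting that every monomial in $(\,\cdot\,)^{\bbb,\sigma(i)}$ carries exactly one $\Yo$, whose block position $\ell$ can only be $i-1$ or $i$, and that deleting $\Yo$ from the position-$\ell$ monomials is a coefficient-preserving bijection onto the type-one monomials (because $\Yo$ commutes with $\Y$), you decouple the $i$-index from $\bbb$ and reduce directly to the type-one case with effective coefficients $c_{\bbb,\ell+1}-c_{\bbb,\ell}$, which the boundary cases $\ell=0$ and $\ell=n$ then telescope to zero. This decoupling is a nice structural simplification not visible in the paper's sketch; the trade-off is that your type-one triangularity verification is the one place requiring care, and you have flagged it correctly.
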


\begin{proof}
We will prove the linear independence of the second set; the  proof for the first set is similar. Suppose 
\[
  \sum_{\sigma \in S_n} \sum_{\bbb \in \BBB{r}} \sum_{i=1}^n \lam{\sigma}{\bbb}{i} \left( X_{\sigma(1)} X_{\sigma(2)} \ldots X_{\sigma(n)} \right)^{\bbb, \sigma(i)} = 0 \text{,}
\]
where some of the  scalars $\lam{\sigma}{\bbb}{i}$ are nonzero.
Without loss of generality, we may assume that the set
\[
  \mathcal{L} = \left\{ (\bbb, i) \in \BBB{r} \times \{ 1, \ldots, n\} \,\, \middle\vert \,\, \lam{{\rm id}}{\bbb}{i} \neq 0\right\}
\]
is nonempty. Define the following  strict total ordering on $\mathcal{L}$:
\begin{align*}
  &(\bbb, i) \prec (\bbb', i') \\
  \iff \,& i < i' ~\text{or}~\left(i = i' ~\text{and}~ \bbb_j < \bbb'_j~\text{for the least $j$ satisfying}~ \bbb_j \neq \bbb'_j \right) \text{.}
\end{align*}
Let  $(\widetilde{\bbb}, \widetilde{i})$ be a  maximal element of $\mathcal{L}$ with respect to $\prec$.  We claim that by rewriting the above summation 
as a linear combination of elements of  $M$, one arrives at a contradiction that $\lam{{\rm id}}{\widetilde{\bbb}}{\widetilde{i}} = 0$. The formal proof is a bit tedious but elementary, so we omit the details.
\end{proof}

\section{A certain system of linear equations}\label{s3}

Let $\sigma \in S_n$  and let $r \in \No$. 
The purpose of this section is to examine the following  system of linear equations:
\begin{align} \label{grozni sistem enacb}
  \sum_{i=1}^n \sum_{\ccc \in \CCC{\sigma}{k}{i}} \skal{\sigma}{\ccc}{i} \mmt{\bbb - \ccc}{i} = 0
\end{align}
for every $k \in \N$ and every $\bbb \in \BBB{r+k}$, 
where 
\[
  \CCC{\sigma}{k}{i} = \left\{ \ccc = (\ccc_1, \ldots, \ccc_n) \in \N_0^n \,\, \middle\vert \,\, \ccc_{\sigma(1)} = \ldots = \ccc_{\sigma(i-1)} = 0,~ \ccc_{\sigma(i)} \geq 1,~ \sum_{j=1}^n \ccc_{j} = k\right\} \text{,}
\]
\[
  \skal{\sigma}{\ccc}{i} = {\sum_{j=1}^n \ccc_j \choose \ccc_{\sigma(i)}, \ldots, \ccc_{\sigma(n)}} = \frac{\left( \sum_{j=1}^n \ccc_{j} \right)!}{\ccc_{\sigma(i)}! \ldots \ccc_{\sigma(n)}!} \in\N \text{,}
\]
and $\mmt{\bbb'}{i}$, $\bbb' \in \BBB{r}$, $ i =1,\ldots, n$, are the unknowns, with
\begin{equation*}\label{co}
\mbox{$\mmt{\bbb'}{i} =0$ whenever $\bbb' \in \Z^n \setminus \BBB{r}$.}
\end{equation*}
The only property of the scalars $\skal{\sigma}{\ccc}{i}$ that we actually need   is that $$\skal{\sigma}{k \e{\sigma(i)}}{i} = 1,$$
where
$$ \e{j} = (0,\dots,0,1,0,\dots,0)$$
with $1$ appearing at the $j$th place.

The sole reason for discussing the  system of equations \eqref{grozni sistem enacb} is that it
 will occur in the proof of the main theorem.
We have introduced it in the exact form  as needed. However, it is clear that there is no loss of generality in assuming that $$\sigma = {\rm id} \text{.}$$ We will  write $\skalt{\ccc}{i}$ for 
$\skal{{\rm id}}{\ccc}{i}$ and 
$\CCCt{k}{i}$
for $\CCC{{\rm id}}{k}{i}$.

The goal of this section is to show that our system of equations has only trivial solution $\mmt{\bbb'}{i} = 0$ for all $\bbb' \in \BBB{r}$, $ i =1,\ldots, n$. We illustrate our method of solving  by a simple example.

\begin{example} \label{primer sistema}
Let  $n=2$ and $r = 1$. We have $\BBB{1} = \{(1, 0),~ (0, 1) \}$. Thus, our unknowns are $\mmt{(1, 0)}{1}$, $\mmt{(0, 1)}{1}$, $\mmt{(1, 0)}{2}$, and $\mmt{(0, 1)}{2}$.

First, let $k=1$. Note that $\BBB{2} = \{(2, 0),~ (1, 1),~(0, 2) \}$, $\CCCt{1}{1} = \{ (1, 0) \}$, $\CCCt{1}{2} = \{ (0, 1) \}$ and $\skalt{(1, 0)}{1} = \skalt{(0, 1)}{2} = 1$.
Thus, we have the equations
\begin{align*}
  \mmt{(1, 0)}{1} + \mmt{(2, -1)}{2}  &= 0 \text{,}\\
  \mmt{(0, 1)}{1} + \mmt{(1, 0)}{2}  &= 0 \text{,} \\
  \mmt{(-1, 2)}{1} + \mmt{(0, 1)}{2}  &= 0 \text{,}
\end{align*}
for $b = (2, 0),~ (1,1),~ (0, 2)$, respectively, with 
 $\mmt{(2, -1)}{2}=\mmt{(-1, 2)}{1}=0$.
By adding the equation $\mmt{(2, -1)}{1} + \mmt{(3, -2)}{2} = 0$, which trivially holds since $\mmt{(2, -1)}{1} = \mmt{(3, -2)}{2} = 0$, we extend the above system to
\begin{align} \label{sistem k1}
\begin{split}
  \mmt{(2, -1)}{1} + \mmt{(3, -2)}{2} &= 0 \text{,} \\
  \mmt{(1, 0)}{1} + \mmt{(2, -1)}{2}  &= 0 \text{,}\\
  \mmt{(0, 1)}{1} + \mmt{(1, 0)}{2}  &= 0 \text{,} \\
  \mmt{(-1, 2)}{1} + \mmt{(0, 1)}{2}  &= 0 \text{.}
\end{split}
\end{align}

Now, let  $k=2$. Note that $\BBB{3} = \{(3, 0),~ (2, 1),~(1, 2),~(0,3) \}$, $\CCCt{2}{1} = \{ (2, 0),~(1, 1) \}$ and $\CCCt{2}{2} = \{ (0, 2) \}$. Also, $\skalt{(2, 0)}{1} = 1$, $\skalt{(1, 1)}{1} = 2$ and $\skalt{(0, 2)}{2} = 1$. Thus, we have the equations
\begin{align} \label{sistem k2}
\begin{split}
  \mmt{(1, 0)}{1} + 2 \mmt{(2, -1)}{1} + \mmt{(3, -2)}{2}  &= 0 \text{,}\\
  \mmt{(0, 1)}{1} + 2 \mmt{(1, 0)}{1} + \mmt{(2, -1)}{2}  &= 0 \text{,} \\
  \mmt{(-1, 2)}{1} + 2 \mmt{(0, 1)}{1} + \mmt{(1, 0)}{2}  &= 0 \text{,} \\
  \mmt{(-2, 3)}{1} + 2 \mmt{(-1, 2)}{1} + \mmt{(0, 1)}{2}  &= 0 \text{,}
\end{split}
\end{align}
for $b = (3, 0),~ (2,1),~ (1, 2),~(0, 3)$, respectively, with 
 $\mmt{(-2, 3)}{1} =0$.

By substracting each line of the system \eqref{sistem k1} from the corresponding line in \eqref{sistem k2}, we obtain
\begin{align} \label{zadnji sistem}
\begin{split}
   \mmt{(1, 0)}{1} + \mmt{(2, -1)}{1} &= 0 \text{,} \\
   \mmt{(0, 1)}{1} + \mmt{(1, 0)}{1} &= 0 \text{,} \\
   \mmt{(-1, 2)}{1} + \mmt{(0, 1)}{1} &= 0 \text{,} \\
   \mmt{(-2, 3)}{1} + \mmt{(-1, 2)}{1} &= 0 \text{.}
\end{split}
\end{align}
Since $\mmt{(2, -1)}{1} = 0$, the first equation yields $\mmt{(1, 0)}{1}=0$. Accordingly, the second equation gives $\mmt{(0, 1)}{1}=0$. From 
\eqref{sistem k1} we  infer that $\mmt{(1, 0)}{2} = \mmt{(0, 1)}{2} =0$. We have thus shown that  (\ref{grozni sistem enacb}) 
has only trivial solution for $n =2$ and $r= 1$.
\end{example}

As we can see from Example \ref{primer sistema}, the number of equations is different for different $k$. We remedy this by adding trivial equations. That is, we extend the system \eqref{grozni sistem enacb} to
\begin{align} \label{razsirjen sistem}
\sum_{i=1}^n \sum_{\ccc \in \CCCt{k}{i}} \skalt{\ccc}{i} \mmt{\bbb - \ccc}{i} = 0
\end{align}
for every $k \in \N$ and every $\bbb \in \Z^n$.

	Let us check  that any solution of the
	restricted system of equations \eqref{grozni sistem enacb} is also a solution of the 
	extended system \eqref{razsirjen sistem}.
	For this purpose, we fix $k \in \No$ and $\bbb \in \Z^n \setminus \BBB{r+k}$, i.e., $\bbb \in \Z^n$ and either $\sum_{j=1}^n \bbb_j \neq r + k$ or $\bbb_j < 0$ for some $j$.
	If $\sum_{j=1}^n \bbb_j \neq r + k$, then $\sum_{j=1}^n (\bbb - \ccc)_j \neq r$ for all $\ccc \in \CCCt{k}{i}$, since $\sum_{j=1}^n \ccc_j = k$. Thus, $\bbb - \ccc \notin \BBB{r}$ for all $\ccc \in \CCCt{k}{i}$; hence, all $\mmt{\bbb - \ccc}{i}$ are zero and  \eqref{razsirjen sistem} holds. Similarly, we arrive at the same conclusion in the case where $\bbb_j < 0$ for some $j$.

Let $V$ be the $F$-vector space with basis $\{ \nabs{\bbb}{i} \mid \bbb \in \Z^n,~ i =1,\ldots, n \}$ (here,
$\nabs{\bbb}{i}$ are just abstract symbols).
 For every $j = 1,\ldots, n$, let $\lm{j}: V \rightarrow V$ be the linear map given by
\[
  \lm{j}:  \nabs{\bbb}{i} \mapsto \nabs{\bbb - \e{j}}{i} \text{.}
\]
Note that the  maps $\lm{j}$ commute among themselves.

We make a connection between  $\nabs{\bbb}{i}$ and $\mmt{\bbb}{i}$ through the linear functional $\phi: V \rightarrow F$ given by
$$\phi \left( \nabs{\bbb}{i} \right) = \mmt{\bbb}{i} \text{.}$$
We can  rewrite the system \eqref{razsirjen sistem} as 
\begin{align*}
  \sum_{i=1}^n \sum_{\ccc \in \CCCt{k}{i}} \skalt{\ccc}{i} \nabs{\bbb - \ccc}{i} \in \ker \phi
\end{align*}
for every   $k \in \N$ and every $\bbb \in \Z^n$. Since $\ccc \in \CCCt{k}{i}$ can be written as $\ccc = \ccc_i \e{i} + \cdots + \ccc_n \e{n}$, we have
$\nabs{\bbb - \ccc}{i} = \lm{i}^{\ccc_i} \ldots \lm{n}^{\ccc_n} \nabs{\bbb}{i}$.
Thus, we can further rewrite our system as
\begin{align*}
  \sum_{i=1}^n \left( \sum_{\ccc \in \CCCt{k}{i}} \skalt{\ccc}{i} \lm{i}^{\ccc_i} \ldots \lm{n}^{\ccc_n} \right) \nabs{\bbb}{i} \in \ker \phi
\end{align*}
for every   $k \in \N$ and every $\bbb \in \Z^n$.  The expression in parentheses can be viewed  as a (commutative) polynomial evaluated in  $\lm{i}, \ldots, \lm{n}$. Since $\skalt{k \e{i}}{i} = 1$ and $\sum_{j=1}^n \ccc_j = k$ for every $\ccc \in \CCCt{k}{i}$, the  degree of this polynomial in the $i$th variable   is exactly $k$.
This motivates the following definition.

\begin{definition}
Let $U$ be a subspace of $V$ and let $i_1, \ldots, i_l \in \{ 1, \ldots, n \}$ be distinct. We say that  $\nabs{\bbb}{i_1}, \ldots, \nabs{\bbb}{i_l}$ \emph{satisfy a recurrence relation of order $k \in \N$} if
\[
  \sum_{i \in \{i_1, \ldots, i_l \}} f_{i}(\lm{i}, \ldots, \lm{n}) \nabs{\bbb}{i} \in U
\]
holds for every $\bbb \in \Z^n$ and  some  polynomials $f_{i} \in F[W_i, \ldots, W_n]$ such that the degree of $f_i$ in  $W_i$ is $k$.
\end{definition}

We summarize our observations up to this point. Suppose $\mmt{\bbb}{i} \in F$ satisfy  \eqref{razsirjen sistem}, where $\mmt{\bbb}{i} = 0$ for all $\bbb \not\in \BBB{r}$, $i = 1,\ldots, n$. By taking $U = \ker\phi$ with $\phi$ defined above, the elements $\nabs{\bbb}{1}, \ldots, \nabs{\bbb}{n}$ satisfy a recurrence relation of order $k$ for each $k \in \N$. Since only finitely many $\mmt{\bbb}{i}$ can be nonzero, $\nabs{\bbb}{i} \in U$ holds for all but finitely many elements. We claim that this already implies that $U = V$ or, equivalently, $\mmt{\bbb}{i} = 0$ for all $\bbb \in \BBB{r}$, $i = 1,\ldots, n$.
To this end, we need a few lemmas.

\begin{lemma} \label{lema o mnozenju}
Let $U$ be a subspace of $V$. 
If $f_{i} \in F[W_i, \ldots, W_n]$ satisfy 
\begin{align} \label{random enacba}
  \sum_{i \in \{i_1, \ldots, i_l \}} f_{i}(\lm{i}, \ldots, \lm{n}) \nabs{\bbb}{i} \in U
\end{align}
for every $\bbb \in \Z^n$, then
\[
  \sum_{i \in \{i_1, \ldots, i_l \}} g(\lm{1}, \ldots, \lm{n}) f_{i}(\lm{i}, \ldots, \lm{n}) \nabs{\bbb}{i} \in U
\]
 for every $\bbb \in \Z^n$ and every $g \in F[W_1, \ldots, W_n]$.
\end{lemma}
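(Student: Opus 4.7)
The plan is straightforward once one observes that each $\lm{j}$ acts as a shift on the basis of $V$ and that the maps $\lm{1},\ldots,\lm{n}$ pairwise commute. First, I would expand $g$ in monomials: write $g = \sum_{\ddd} \alpha_\ddd W_1^{\ddd_1} \cdots W_n^{\ddd_n}$ for finitely many $\ddd = (\ddd_1, \ldots, \ddd_n) \in \No^n$ and scalars $\alpha_\ddd \in F$. By linearity of the expression
\[
\sum_{i \in \{i_1, \ldots, i_l\}} g(\lm{1}, \ldots, \lm{n})\, f_{i}(\lm{i}, \ldots, \lm{n})\, \nabs{\bbb}{i}
\]
in $g$, and using the fact that $U$ is a subspace, the conclusion reduces to the case where $g$ is a single monomial $W_1^{\ddd_1} \cdots W_n^{\ddd_n}$.

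Next, using the commutativity of the $\lm{j}$'s, I would move $\lm{1}^{\ddd_1} \cdots \lm{n}^{\ddd_n}$ past $f_{i}(\lm{i}, \ldots, \lm{n})$. The defining property $\lm{j} \nabs{\bbb}{i} = \nabs{\bbb - \e{j}}{i}$ iterates to $\lm{1}^{\ddd_1} \cdots \lm{n}^{\ddd_n} \nabs{\bbb}{i} = \nabs{\bbb - \ddd}{i}$, so the sum I want to lie in $U$ becomes
\[
\sum_{i \in \{i_1, \ldots, i_l\}} f_{i}(\lm{i}, \ldots, \lm{n})\, \nabs{\bbb - \ddd}{i}.
\]
This is exactly the left-hand side of the hypothesis \eqref{random enacba} with $\bbb$ replaced by $\bbb - \ddd \in \Z^n$, so it lies in $U$ by assumption.

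There is no genuine obstacle: the argument is essentially bookkeeping that rests on three ingredients, namely (i) linearity of the claim in $g$, (ii) the pairwise commutativity of the shift operators $\lm{j}$, and (iii) the fact that the hypothesis is assumed for \emph{every} $\bbb \in \Z^n$, and hence remains available after an arbitrary integer shift $\bbb \mapsto \bbb - \ddd$.
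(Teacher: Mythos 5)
Your proof is correct and follows exactly the same route as the paper's: reduce by linearity to the case where $g$ is a monomial, commute it past $f_i$, absorb the resulting shift into the basis index, and invoke the hypothesis at the shifted index $\bbb - \ddd \in \Z^n$.
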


\begin{proof}
Since $U$ is a vector space, it suffices to consider the case where $g$ is a monomial, so 
\[
g(\lm{1}, \dots, \lm{n}) = \lm{1}^{j_1} \ldots \lm{n}^{j_n}.
\]
Take $\bbb \in \Z^n$.
Using the commutativity of the $\lm{j}$s, we have
\begin{align*}
  \sum_{i \in \{i_1, \ldots, i_l \}} g(\lm{1}, \ldots, \lm{n}) f_{i}(\lm{i}, \ldots, \lm{n}) \nabs{\bbb}{i} &= \sum_{i \in \{i_1, \ldots, i_l \}} f_{i}(\lm{i}, \ldots, \lm{n}) \lm{1}^{j_1} \ldots \lm{n}^{j_n} \nabs{\bbb}{i} \\
  &= \sum_{i \in \{i_1, \ldots, i_l \}} f_{i}(\lm{i}, \ldots, \lm{n}) \nabs{\bbb - j_1 \e{1} - \cdots - j_n \e{n}}{i} \text{.}
\end{align*}
Since \eqref{random enacba} holds for
$\bbb' = \bbb - j_1 \e{1} - \cdots - j_n \e{n}$,
the desired conclusion follows.
\end{proof}

The next lemma 
 generalizes the process of subtracting equations in  Example \ref{primer sistema} (and 
is essentially a version of  Gaussian elimination).

\begin{lemma} \label{lema32} Let $U$ be a subspace of  $V$ and let
$1\le i_1 < \dots < i_l  \le n$. 
Suppose  $\nabs{\bbb}{i_1}, \ldots, \nabs{\bbb}{i_l}$ satisfy $l$ recurrence relations of order $k_1 > \dots > k_l$. Then $\nabs{\bbb}{i_1}$ satisfy a recurrence relation (of order $k_1$).
\end{lemma}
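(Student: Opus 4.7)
The plan is to argue by induction on $l$. The base case $l = 1$ is the content of the hypothesis. For $l \ge 2$, I would perform one Gaussian-style elimination step to reduce $l$ recurrences in $l$ variables (with strictly decreasing orders $k_1 > \cdots > k_l$) to $l-1$ recurrences in $l-1$ variables with strictly decreasing orders $k_1 > \cdots > k_{l-1}$; the inductive hypothesis then yields the desired recurrence of order $k_1$ for $\nabs{\bbb}{i_1}$ alone.

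Concretely, write the $s$-th recurrence ($s = 1, \ldots, l$) as $\sum_{j=1}^{l} f^{(s)}_{i_j}(\lm{i_j}, \ldots, \lm{n}) \nabs{\bbb}{i_j} \in U$, where each $f^{(s)}_{i_j} \in F[W_{i_j}, \ldots, W_n]$ has degree $k_s$ in $W_{i_j}$. I would then invoke Lemma \ref{lema o mnozenju} to multiply the $s$-th relation (for $s < l$) by $f^{(l)}_{i_l}(\lm{i_l}, \ldots, \lm{n})$ and the $l$-th relation by $f^{(s)}_{i_l}(\lm{i_l}, \ldots, \lm{n})$, viewed as elements of $F[W_1,\ldots,W_n]$, and subtract. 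The $\nabs{\bbb}{i_l}$-terms cancel, leaving
\[
\sum_{j=1}^{l-1} \bigl( f^{(l)}_{i_l} f^{(s)}_{i_j} - f^{(s)}_{i_l} f^{(l)}_{i_j} \bigr)(\lm{i_j}, \ldots, \lm{n}) \nabs{\bbb}{i_j} \in U.
\]

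The only genuine obstacle is the degree bookkeeping: for each $s < l$ and each $j \in \{1, \ldots, l-1\}$, the new coefficient $f^{(l)}_{i_l} f^{(s)}_{i_j} - f^{(s)}_{i_l} f^{(l)}_{i_j}$ must lie in $F[W_{i_j}, \ldots, W_n]$ and have degree exactly $k_s$ in $W_{i_j}$. Membership is immediate from $i_j < i_l$. For the degree, both $f^{(l)}_{i_l}$ and $f^{(s)}_{i_l}$ lie in $F[W_{i_l}, \ldots, W_n]$ and hence have $W_{i_j}$-degree $0$, so the two summands have $W_{i_j}$-degrees $k_s$ and $k_l$, respectively. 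Here the strict inequality $k_l < k_s$ coming from $k_1 > \cdots > k_l$ is essential: no cancellation of the top-degree term can occur, and that top-degree term is $f^{(l)}_{i_l}$ times the leading $W_{i_j}$-coefficient of $f^{(s)}_{i_j}$, a product of two nonzero polynomials in an integral domain and therefore nonzero. This confirms that the reduced system is a genuine collection of $l-1$ recurrences of orders $k_1 > \cdots > k_{l-1}$ for the shorter tuple $\nabs{\bbb}{i_1}, \ldots, \nabs{\bbb}{i_{l-1}}$, and the induction closes.
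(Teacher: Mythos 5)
Your proof is correct and follows essentially the same route as the paper's: induction on $l$, one Gaussian-elimination step using Lemma~\ref{lema o mnozenju} to cancel the $\nabs{\bbb}{i_l}$ terms, and the observation that $k_s > k_l$ together with $i_j < i_l$ forces the new coefficient to retain degree $k_s$ in $W_{i_j}$. The degree bookkeeping you carry out matches the paper's argument; the only cosmetic difference is your explicit appeal to the integral-domain property of $F[W_1,\ldots,W_n]$, which the paper leaves implicit.
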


\begin{proof}
We proceed by induction on $l$. There is nothing to prove if  $l=1$, so assume that $l > 1$ and that the lemma is true for all numbers smaller than $l$.
 By assumption,
\[
  \sum_{i \in \{i_1, \ldots, i_l \}} f_{i j}(\lm{i}, \ldots, \lm{n}) \nabs{\bbb}{i} \in U
\]
for every $\bbb \in \Z^n$ and every $j = 1, \ldots, l$, where $f_{i j} \in F[W_i, \ldots, W_n]$ are polynomials such that the   degree of $f_{ij}$ in $W_i$ is $k_j$.

Fix $j \in \{1, \ldots, l - 1 \}$ and $\bbb \in \Z^n$.
By Lemma \ref{lema o mnozenju}, we have
\begin{align*}
   &\sum_{i \in \{i_1, \ldots, i_l \}} f_{i_l j}(\lm{i_l}, \ldots, \lm{n}) f_{i l}(\lm{i}, \ldots, \lm{n}) \nabs{\bbb}{i} \in U \text{,}\\
   &\sum_{i \in \{i_1, \ldots, i_l \}} f_{i_l l}(\lm{i_l}, \ldots, \lm{n}) f_{i j}(\lm{i}, \ldots, \lm{n}) \nabs{\bbb}{i} \in U \text{.}
\end{align*}
Subtracting yields
\[
  \sum_{i \in \{i_1, \ldots, i_l \}} g_{i j}(\lm{i}, \ldots, \lm{n}) \nabs{\bbb}{i} \in U \text{,}
\]
where
\begin{align*}
  &g_{i j}(W_i, \ldots, W_n)\\
  =\,& f_{i_l l}(W_{i_l}, \ldots, W_n) f_{i j}(W_i, \ldots, W_n) - f_{i_l j}(W_{i_l}, \ldots, W_n) f_{i l}(W_i, \ldots, W_n) \text{.}
\end{align*}
Note that $g_{i_l j} = 0$. For every $i \in \{ i_1, \ldots, i_{l-1} \}$, we have $i < i_l$. Hence,  the variable $W_i$ does not occur in the 
polynomials $f_{i_l l}(W_{i_l}, \ldots, W_n)$ and $f_{i_l j}(W_{i_l}, \ldots, W_n)$.
Since the  degree of the  polynomials $f_{i j}(W_i, \ldots, W_n)$ and $f_{i l}(W_i, \ldots, W_n)$ in $W_i$  is $k_j$ and $k_l$, respectively, with $k_j > k_l$, and the polynomial $f_{i_l l}(W_{i_l}, \ldots, W_n)$ is nonzero, the  degree of the  polynomial $g_{i j}(W_i, \ldots, W_n)$ in $W_i$ is $k_j$.

As $j \in \{1, \ldots, l-1\}$ and $\bbb \in \Z^n$ were arbitrary, we have
\[
  \sum_{i \in \{i_1, \ldots, i_{l-1} \}} g_{i j}(\lm{i}, \ldots, \lm{n}) \nabs{\bbb}{i} \in U 
\]
for every $\bbb \in \Z^n$ and every $j \in \{1, \ldots, l-1\}$. Since the  degree of $g_{i j}(W_i, \ldots, W_n)$ in  $W_i$   is $k_j$, $\nabs{\bbb}{i_1}, \ldots, \nabs{\bbb}{i_{l-1}}$ satisfy $l-1$ recurrence relations of order $k_1 > \dots > k_{l-1}$. We may now use the induction hypothesis 
and the lemma follows.
\end{proof}

The next lemma 
 generalizes the process  of solving  the system of equations \eqref{zadnji sistem} in Example \ref{primer sistema}.

\begin{lemma} \label{lema33}
Let $i \in \{1,\ldots, n\}$  and  let $U$  be a subspace of $V$ such that $\nabs{\bbb}{i} \in U$  for all but  finitely many  $\bbb \in \Z^n$. If $\nabs{\bbb}{i}$ satisfy a recurrence relation, then $\nabs{\bbb}{i} \in U$  for all $\bbb \in \Z^n$.
\end{lemma}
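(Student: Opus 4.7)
The plan is to argue by contradiction. Suppose that the set $S := \{\bbb \in \Z^n : \nabs{\bbb}{i} \notin U\}$ is nonempty; by hypothesis $S$ is finite. The assumed recurrence relation provides a polynomial $f \in F[W_i, \ldots, W_n]$ of degree exactly $k$ in $W_i$ with $f(\lm{i}, \ldots, \lm{n})\nabs{\bbb}{i} \in U$ for every $\bbb \in \Z^n$. I would decompose $f = W_i^k\,p + q$, where $p = p(W_{i+1}, \ldots, W_n) \neq 0$ and $\deg_{W_i} q < k$, and let $\gamma^* = (\gamma^*_{i+1}, \ldots, \gamma^*_n) \in \No^{n-i}$ be the lex-greatest multi-index (ordered by $W_{i+1} > \cdots > W_n$) whose monomial appears in $p$ with nonzero coefficient $c$.

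Next, equip $\Z^n$ with the total order $\prec$ given by the lex comparison of the reordered tuples $(\bbb_i, \bbb_{i+1}, \ldots, \bbb_n, \bbb_1, \ldots, \bbb_{i-1})$, so that the coordinates $i, \ldots, n$ dominate. Since $S$ is finite and nonempty, it has a $\prec$-maximum $\bbb^*$. Put
\[
  \bbb' := \bbb^* + k\,\e{i} + \gamma^*_{i+1}\,\e{i+1} + \cdots + \gamma^*_n\,\e{n},
\]
and expand $f(\lm{i}, \ldots, \lm{n})\nabs{\bbb'}{i}$ by multilinearity. Each monomial $c_{j,\gamma} W_i^j W_{i+1}^{\gamma_{i+1}} \cdots W_n^{\gamma_n}$ of $f$ contributes a scalar multiple of $\nabs{\bbb' - j\,\e{i} - \gamma_{i+1}\,\e{i+1} - \cdots - \gamma_n\,\e{n}}{i}$, and by the recurrence relation the whole sum lies in $U$.

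The key step is to show that $\nabs{\bbb^*}{i}$ is the only summand whose index lies in $S$. The leading monomial $(j,\gamma) = (k, \gamma^*)$ contributes exactly $c\,\nabs{\bbb^*}{i}$. Every other resulting index has the form $\bbb^* + (k-j)\,\e{i} + (\gamma^*_{i+1} - \gamma_{i+1})\,\e{i+1} + \cdots + (\gamma^*_n - \gamma_n)\,\e{n}$. If $j < k$, its $i$-th coordinate exceeds $\bbb^*_i$, so it is $\succ \bbb^*$. If $j = k$ then $\gamma \neq \gamma^*$; by lex-maximality of $\gamma^*$, we have $\gamma <_{\mathrm{lex}} \gamma^*$, so at the first position $l > i$ where they disagree one has $\gamma^*_l > \gamma_l$, while the coordinates strictly between $i$ and $l$ coincide with those of $\bbb^*$, again giving an index $\succ \bbb^*$. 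By maximality of $\bbb^*$ all these indices lie outside $S$, so the associated $\nabs{\cdot}{i}$ all lie in $U$; consequently $c\,\nabs{\bbb^*}{i} \in U$, and since $c \neq 0$ this contradicts $\bbb^* \in S$. The main obstacle is precisely aligning the decomposition of $f$ with the choice of total order on $\Z^n$ so that the leading monomial $(k,\gamma^*)$ uniquely pinpoints $\bbb^*$ in the expansion; once this is set up, the case analysis is mechanical.
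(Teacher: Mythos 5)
Your proposal is correct and follows essentially the same route as the paper: both pick the lexicographically leading monomial of $f$ (your $(k,\gamma^*)$ is exactly the paper's $(\widetilde{j}_i,\ldots,\widetilde{j}_n)$), take a $\prec$-extremal element of the finite set $S$, shift it by the leading exponent vector, and observe that every other monomial of $f$ lands outside $S$, forcing the leading coefficient to vanish. The only cosmetic difference is that you use a total order on $\Z^n$ (appending coordinates $<i$ as a tie-breaker, which never matters since $f$ only involves $W_i,\ldots,W_n$) whereas the paper uses the corresponding partial order on coordinates $\geq i$ and selects a maximal element.
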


\begin{proof}
Let 
\[
  f(W_i, \ldots, W_n) = \sum_{j_i, \ldots, j_n \geq 0} \lambda_{j_i, \ldots, j_n} W_i^{j_i} \ldots W_n^{j_n}
\]
be such that
\begin{align} \label{prva enacba}
  f(\lm{i}, \ldots, \lm{n}) \nabs{\bbb}{i} \in U
\end{align}
for every $\bbb \in \Z^n$.
The set
\[
  J = \left\{ (j_i, \ldots, j_n) \in \No^{n-i+1} \,\, \middle\vert \,\, \lambda_{j_i, \ldots, j_n} \neq 0\right\} 
\]
is nonempty and finite. We can write  $f$ as
\[
  f(W_i, \ldots, W_n) = \sum_{(j_i, \ldots, j_n) \in J} \lambda_{j_i, \ldots, j_n} W_i^{j_i} \ldots W_n^{j_n} \text{.}
\]
Let $(\widetilde{j}_i, \ldots, \widetilde{j}_n)$ be a maximal element of $J$ with respect to the lexicographical ordering.

The set
$S = \{ \bbb \in \Z^n \mid \nabs{\bbb}{i} \notin U\}$
is finite.
Suppose $S$ is nonempty. Define the following strict partial ordering on $S$:
\begin{align*}
  \bbb \prec \bbb' \iff \bbb_k < \bbb'_k ~\text{for the least $k \geq i$ satisfying $\bbb_k \neq \bbb'_k$} \text{.}
\end{align*}
We remark that  this ordering is lexicographic in the last $n-i+1$ terms. Let $\widetilde{\bbb}$ be a maximal element of $S$ with respect to $\prec$.

Set
\[
  \bbb' = \widetilde{\bbb} + \widetilde{j_i} \e{i} + \cdots + \widetilde{j}_n \e{n} \text{.}
\]
We claim that there exists a $u \in U$ such that
\begin{align}
\label{druga enacba}
  f(\lm{i}, \ldots, \lm{n}) \nabs{\bbb'}{i} = \lambda_{\widetilde{j}_i, \ldots, \widetilde{j}_n} \nabs{\widetilde{\bbb}}{i} + u.
\end{align}
Indeed, by definition of $\lm{j}$, 
\begin{align*}
  f(\lm{i}, \ldots, \lm{n}) \nabs{\bbb'}{i} &= \sum_{(j_i, \ldots, j_n) \in J} \lambda_{j_i, \ldots, j_n} \lm{i}^{j_i} \ldots \lm{n}^{j_n}\nabs{\bbb'}{i} \\
  &=  \sum_{(j_i, \ldots, j_n) \in J} \lambda_{j_i, \ldots, j_n}\nabs{\bbb' - j_i \e{i} - \cdots - j_n \e{n}}{i} \text{.}
\end{align*}
Take $(j_i, \ldots, j_n) \in J$ distinct from $(\widetilde{j}_i, \ldots, \widetilde{j}_n)$. Since $(\widetilde{j}_i, \ldots, \widetilde{j}_n)$ is a maximal element of $J$ and the lexicographical ordering is linear, we have $j_k < \widetilde{j}_k$ for the least $k$ satisfying $j_k \neq \widetilde{j}_k$.
Hence
\begin{align*}
  \bbb &= \bbb' - j_i \e{i} - \cdots - j_n \e{n} \\
  &= \widetilde{\bbb} + (\widetilde{j_i} - j_i) \e{i} + \cdots + (\widetilde{j}_n - j_n) \e{n} \text{,}
\end{align*}
satisfies
\[
  \widetilde{\bbb} \prec \bbb
\]
if $\bbb \in S$.
As $\widetilde{\bbb}$ is a maximal element of $S$, it follows that $\bbb \notin S$, i.e., $\nabs{\bbb}{i} \in U$. Thus, all summands of
\[
  \sum_{(j_i, \ldots, j_n) \in J} \lambda_{j_i, \ldots, j_n}\nabs{\bbb' - j_i \e{i} - \cdots - j_n \e{n}}{i}
\]
except the one corresponding to $(\widetilde{j}_i, \ldots, \widetilde{j}_n)$
are elements of $U$. This proves our claim \eqref{druga enacba}.

From \eqref{prva enacba} and \eqref{druga enacba} we deduce
\[
  \lambda_{\widetilde{j}_i, \ldots, \widetilde{j}_n} \nabs{\widetilde{\bbb}}{i} \in U \text{.}
\]
Since $\widetilde{\bbb} \in S$, we have $\nabs{\widetilde{\bbb}}{i} \notin U$. Thus, $\lambda_{\widetilde{j}_i, \ldots, \widetilde{j}_n} = 0$, which is a contradiction to $(\widetilde{j}_i, \ldots, \widetilde{j}_n) \in J$.
\end{proof}

Recall that we only need to prove that $U=V$, having in mind that  $\nabs{\bbb}{1}, \ldots, \nabs{\bbb}{n}$ satisfy a recurrence relation of order $k$ for each $k \in \N$ and that $\nabs{\bbb}{i} \in U$ holds for all but finitely many elements. This easily follows from repeated use of Lemmas \ref{lema32} and \ref{lema33}.
Thus, we have proven the following proposition, which concludes this section.

\begin{proposition} \label{grozna trditev}
Let $\sigma \in S_n$ and $r \in \No$. If $\mmt{\bbb'}{i} \in F$ are such that $\mmt{\bbb'}{i} = 0$ whenever $\bbb' \not\in \BBB{r}$ and
\begin{align*}
  \sum_{i=1}^n \sum_{\ccc \in \CCC{\sigma}{k}{i}} \skal{\sigma}{\ccc}{i} \mmt{\bbb - \ccc}{i} = 0
\end{align*}
for every $k \in \N$ and every $\bbb \in \BBB{r+k}$, then $\mmt{\bbb'}{i} = 0$  for all $\bbb' \in \BBB{r}$, $i = 1,\ldots, n$.
\end{proposition}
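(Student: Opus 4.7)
The plan is to follow the roadmap already laid out by the author, which reduces the problem to iterated application of Lemmas \ref{lema32} and \ref{lema33}. First I would reduce to $\sigma = \mathrm{id}$ and extend the given system to \eqref{razsirjen sistem}, valid for every $\bbb \in \Z^n$; the extension is legal because the unknowns vanish outside $\BBB{r}$. Then I would pass to the vector space $V$ with basis $\{\nabs{\bbb}{i}\}$, introduce the shifts $\lm{j}$ and the functional $\phi$ with $U := \ker\phi$ defined by $\phi(\nabs{\bbb}{i}) = \mmt{\bbb}{i}$. Writing each $\ccc \in \CCCt{k}{i}$ as $\ccc = \ccc_i \e{i} + \cdots + \ccc_n \e{n}$, the extended system becomes
\[
\sum_{i=1}^{n} f_{i,k}(\lm{i},\ldots,\lm{n})\,\nabs{\bbb}{i} \in U
\]
for every $k \in \N$ and every $\bbb \in \Z^n$, where $f_{i,k}(W_i,\ldots,W_n) = \sum_{\ccc \in \CCCt{k}{i}} \skalt{\ccc}{i} W_i^{\ccc_i} \ldots W_n^{\ccc_n}$. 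Crucially, since $\skalt{k\e{i}}{i} = 1$ and $\sum_j \ccc_j = k$ for $\ccc \in \CCCt{k}{i}$, the monomial $W_i^k$ appears with coefficient $1$ in $f_{i,k}$, so $f_{i,k}$ has degree exactly $k$ in $W_i$. Hence $\nabs{\bbb}{1},\ldots,\nabs{\bbb}{n}$ satisfy a recurrence relation of order $k$ for every $k \in \N$.

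Next I would observe that only finitely many $\mmt{\bbb}{i}$ are nonzero, so $\nabs{\bbb}{i} \in U$ for all but finitely many $\bbb \in \Z^n$, for every $i$. The proof is then a descending induction on the set of surviving indices. Using $n$ of the infinitely many available recurrence relations, say of orders $n > n-1 > \cdots > 1$, I apply Lemma \ref{lema32} with $l = n$ and $i_j = j$ to obtain a single recurrence relation involving only $\nabs{\bbb}{1}$. Lemma \ref{lema33} then forces $\nabs{\bbb}{1} \in U$ for every $\bbb \in \Z^n$, i.e., $\mmt{\bbb}{1} = 0$ for all $\bbb$.

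Once $\nabs{\bbb}{1} \in U$ identically, the $i=1$ summand drops out of every recurrence relation, and $\nabs{\bbb}{2}, \ldots, \nabs{\bbb}{n}$ satisfy $n-1$ recurrence relations of any prescribed (strictly decreasing) orders. Repeating the argument — Lemma \ref{lema32} to isolate $\nabs{\bbb}{2}$, then Lemma \ref{lema33} to conclude $\nabs{\bbb}{2} \in U$ — and iterating through $i = 3, \ldots, n$, I obtain $\nabs{\bbb}{i} \in U$ for every $\bbb$ and every $i$. Equivalently, $U = V$, which is exactly the claim $\mmt{\bbb'}{i} = 0$ for all $\bbb' \in \BBB{r}$ and $i = 1, \ldots, n$.

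The main obstacle is purely bookkeeping: one must verify that after eliminating $\nabs{\bbb}{1}$ the remaining relations still have $W_i$-degree exactly $k$ (so the hypothesis of Lemma \ref{lema32} remains satisfied at the next stage), and that the ``all but finitely many'' condition of Lemma \ref{lema33} persists throughout the iteration. Both are immediate — the former because eliminating a summand does not alter the degree of the others, the latter because it holds for each $i$ from the outset — so once the translation into recurrence-relation language is in place, the rest of the argument is a clean induction on $n$.
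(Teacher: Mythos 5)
Your proposal is correct and follows exactly the route the paper intends: the paper itself only says the proposition ``follows from repeated use of Lemmas~\ref{lema32} and~\ref{lema33}'' after the translation into recurrence-relation language, and your iteration (eliminate $\nabs{\bbb}{1}$ via Lemma~\ref{lema32} with orders $n>\cdots>1$, conclude via Lemma~\ref{lema33}, drop the first summand, and repeat) is precisely the intended repeated use. The two bookkeeping points you flag at the end are indeed the only things to verify, and your justifications for them are accurate.
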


\section{Main theorem}\label{s4}

First, we fix some notation. Throughout this section, we assume that $A$ is a unital algebra containing an element $\y$ with the property that 
$$1\in \bigcap_{k\in\N}[\y,A]_k \text{.}$$ That is to say, 
for each 
$k \in \N$ there exists a $z_k \in A$ such that
\begin{equation}\label{e1}
[\underbrace{\y, \ldots, \y}_k, z_k] = 1 \text{.}
\end{equation}
Let us emphasize that this is
the only requirement we impose on $A$. Thus,   $A$ does not need to have a surjective inner derivation, but only an element $\y$ satisfying \eqref{e1} for every 
$k\in \mathbb N$.
Observe  that under the assumption that $F$ has characteristic $0$
it is enough  to require only the existence of $z_1$, since then $z_k = \frac{1}{k!}z_1^k$ automatically satisfies \eqref{e1} for any $k$.

We will write $$\Ao=\bigcap_{k\in\N}[\y,A]_k \text{.}$$  Thus, our assumption is that $1\in \Ao$. 
Our goal is to prove Theorem \ref{mt} which states that the image 
of any nonzero admissible partially commutative polynomial $f$ 
contains $\Ao$. If 
the derivation $x\mapsto [\y,x]$ is surjective, then  $\Ao=A$ and so this  simply means that $f$ is surjective on $A$.

The  proof is by induction on the number of noncommuting variables $X_1, \ldots, X_n$. 
We start with  the base case $n=1$.

\begin{lemma} \label{baza}
	Let
	$f \in \pcp{F}{X_1}{\Yo,\Y}$
	be a nonzero admissible partially commutative polynomial. Then
	$\Ao \subseteq f(A; \y)$.
\end{lemma}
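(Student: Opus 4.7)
The plan is to exploit the drastic simplification of admissible partially commutative polynomials when $n = 1$. The first step is to observe that $S_1 = \{{\rm id}\}$ and that $\BBB{r} = \{(r)\}$ consists of a single tuple. Consequently, a nonzero type-one admissible polynomial of order $r$ is just $\lambda X_1^{(r)} = \lambda [\Y, X_1]_r$ for some $0 \ne \lambda \in F$, while a nonzero type-two admissible polynomial of order $r$ is just $\lambda X_1^{(r), 1} = \lambda [\Yo, [\Y, X_1]_r]$.

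Fix $y \in \Ao$. For a type-one polynomial $f = \lambda X_1^{(r)}$, the idea is to use $y \in \Ao \subseteq [\y, A]_r$ directly: pick $w \in A$ with $[\y, w]_r = \lambda^{-1} y$ and verify $\Ev{w}{\yo}(f) = \lambda [\y, w]_r = y$ for any choice of $\yo \in F[\y]$. For a type-two polynomial $f = \lambda X_1^{(r), 1}$, the plan is to collapse it to the type-one situation by choosing $\yo = \y$ (which lies in $F[\y]$). Then $\Ev{x_1}{\y}(f) = \lambda [\y, [\y, x_1]_r] = \lambda [\y, x_1]_{r+1}$, and the inclusion $y \in \Ao \subseteq [\y, A]_{r+1}$ provides an $x_1 \in A$ with $\lambda [\y, x_1]_{r+1} = y$. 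Equivalently, one may apply $\h{1}$ and use \eqref{gg} to reduce the type-two case to a type-one question of order $r+1$.

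The base case presents no real obstacle: once one notes that admissible polynomials in a single noncommuting variable reduce to a single iterated commutator (decorated by $\Yo$ in the type-two case), the conclusion becomes the very definition of $\Ao$. The substantive work lies in the inductive step carried out in the remainder of Section~\ref{s4}.
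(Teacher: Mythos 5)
Your proof is correct and takes essentially the same approach as the paper: for $n=1$ the polynomial collapses to a single (possibly $\Yo$-wrapped) iterated commutator, and the conclusion follows directly from the definition of $\Ao$ by substituting $\yo = \y$ and using $\Ao \subseteq [\y,A]_r$ (resp.\ $[\y,A]_{r+1}$). The paper only writes out the type-two case and leaves type one as ``similar''; you spell out both, which is a harmless elaboration.
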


\begin{proof}
	We will only prove the lemma for admissible polynomials of type two; the type one case is similar. Since $n=1$,
	$\BBB{r} = \{ (r)\}$.
	Thus,  $f$ is equal to
	\[
	f = \lambda X_1^{(r),1} = \lambda [\Yo, [\Y, X_1]_r]
	\]
	for some nonzero $\lambda \in F$. Let $a$ be an arbitrary element in $\Ao$. By the definition of $\Ao$, there exists an $x \in A$ such that
	$a = [\y, x]_{r+1}$.
	As the image of $f$ contains
	\begin{align*}
	\Ev{\lambda^{-1} x}{\y}(f) = \lambda [\y, [\y, \lambda^{-1} x]_r] = a \text{,}
	\end{align*}
	$\Ao \subseteq f(A; \y)$ follows.
\end{proof}

To make the  induction step,
we first record  a  lemma which will help us  reduce the number of noncommuting variables in a suitable way.

\begin{lemma} \label{x v y}
	Let
	$f \in \pcp{F}{X_1,\ldots,X_n}{\Yo,\Y}$
	be a polynomial of the form
	\[
	f = \sum_{\sigma \in S_{n-1}} \sum_{j = 1}^n \sum_{\bbb \in \BBB{r}}\lama{\sigma}{\bbb}{j} \left( X_{\sigma(1)} \ldots X_{\sigma(j-1)} X_{n} X_{\sigma(j)} \ldots X_{\sigma(n-1)} \right)^{\bbb} \text{,}
	\]
	where  $r \in \N_0$. Let $k \in \{0, 1, \ldots, r \}$ be  such that for every $\bbb \in \BBB{r}$, $\bbb_n < k$ implies $\lama{\sigma}{\bbb}{j} = 0$ for every  $\sigma \in S_{n-1}$ and every $j \in \{1, \ldots, n\}$. Then the partially commutative polynomial
	$g \in \pcp{F}{X_1,\ldots,X_n}{\Yo,\Y}$
	defined by
	\[
	g = \sum_{\sigma \in S_{n-1}} \sum_{j=1}^n \sum_{\bbbi \in \BBBi{r}}\lama{\sigma}{\bbbi}{j} \left( X_{\sigma(1)} \ldots X_{\sigma(j-1)} \right)^{\bbbi} \Yo \left( X_{\sigma(j)}  \ldots X_{\sigma(n-1)} \right)^{\bbbi} \text{,}
	\]
	where $\BBBi{r} = \{ \bbbi \in \BBB{r} \mid \bbbi_n = k\}$, satisfies
	$g(A; \y) \subseteq f(A; \y)$.
\end{lemma}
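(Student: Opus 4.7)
The plan is to realize every evaluation of $g$ as an evaluation of $f$ via a single, carefully chosen substitution for $X_n$. Since $f$ does not involve $\Yo$, the factor $\Yo$ that appears in $g$ must be produced from inside $A$ by collapsing an iterated commutator $[\y,\ldots,\y,X_n]$ to a scalar. Concretely, I would fix an element $z_k\in A$ with $[\y,z_k]_k=1$ (which exists by the standing hypothesis on $A$), and, given arbitrary $\yo\in F[\y]$ and $x_1,\ldots,x_{n-1}\in A$, consider the substitution $X_l\mapsto x_l$ for $l<n$ and $X_n\mapsto z_k\yo$ in $f$.

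The key computation is the identity
\[
  [\y,z_k\yo]_m=[\y,z_k]_m\,\yo \qquad (m\ge 1),
\]
which follows by a straightforward induction on $m$ from $[\y,z_k\yo]=[\y,z_k]\yo$; the latter holds because $\yo\in F[\y]$ commutes with $\y$. Since $[\y,z_k]_k=1$ and $[\y,1]=0$, this gives $[\y,z_k\yo]_k=\yo$ and $[\y,z_k\yo]_m=0$ for $m>k$. For $m<k$ no further information is needed, because the coefficient $\lama{\sigma}{\bbb}{j}$ vanishes whenever $\bbb_n<k$ by hypothesis.

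Now I would expand $\Ev{x_1,\ldots,x_{n-1},z_k\yo}{}(f)$ term by term. In the evaluation of $\left(X_{\sigma(1)}\ldots X_{\sigma(j-1)}X_n X_{\sigma(j)}\ldots X_{\sigma(n-1)}\right)^{\bbb}$, the factor at the $X_n$ position becomes $[\y,z_k\yo]_{\bbb_n}$, while the other factors become $[\y,x_l]_{\bbb_l}$. Summands with $\bbb_n>k$ therefore vanish, summands with $\bbb_n<k$ are killed by the hypothesis on the coefficients, and summands with $\bbb_n=k$ produce exactly $\yo$ in the middle position. Summing over the surviving $\bbb\in\BBBi{r}$ yields precisely $\Ev{x_1,\ldots,x_{n-1}}{\yo}(g)$. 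Since $x_1,\ldots,x_{n-1}\in A$ and $\yo\in F[\y]$ were arbitrary, this establishes $g(A;\y)\subseteq f(A;\y)$.

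There is no substantial obstacle here; the whole argument rests on a single observation. Because $\yo$ commutes with $\y$, multiplying $z_k$ on the right by $\yo$ lets $\yo$ slide past iterated commutators with $\y$, and so the choice $X_n\mapsto z_k\yo$ ``stamps'' an arbitrary element of $F[\y]$ into the slot occupied by $X_n$ after exactly $k$-fold application of $[\y,\cdot\,]$, with higher commutators annihilated by $[\y,1]=0$ and lower ones discarded by hypothesis.
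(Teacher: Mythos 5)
Your proof is correct and follows the same route as the paper: evaluate $f$ under the substitution $X_n \mapsto z_k\yo$, observe that $[\y,z_k\yo]_{\bbb_n} = [\y,z_k]_{\bbb_n}\yo$ because $\yo$ commutes with $\y$, and then note that this vanishes for $\bbb_n > k$, equals $\yo$ for $\bbb_n = k$, and that the $\bbb_n < k$ terms are killed by the hypothesis on the coefficients.
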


\begin{proof}
	Take $x_1,\ldots, x_{n-1} \in A$ and $\yo \in F[\y]$. Let $z_k \in A$ be the element satisfying \eqref{e1}.  We will prove that
	\[
	\Ev{x_1,\ldots, x_{n-1}}{\yo}(g) = \Ev{x_1,\ldots, x_{n-1},z_k \yo}{}(f) \text{,}
	\]
	from which $g(A; \y) \subseteq f(A; \y)$ follows.

	Consider the expression
	$\Ev{x_1,\ldots, x_{n-1},z_k \yo}{}(\lama{\sigma}{\bbb}{j} X_n^{\bbb})$
	with $\bbb \in \BBB{r}$, $\sigma \in S_{n-1}$, and  $j \in \{1, \ldots, n\}$.
	If $\bbb_n < k$, then $\lama{\sigma}{\bbb}{j}=0$ and so this expression is zero, and 
	if $\bbb_n \geq k$, then it is equal to
	\begin{align*}
	\Ev{x_1,\ldots, x_{n-1},z_k \yo}{}(\lama{\sigma}{\bbb}{j} X_n^{\bbb}) &= \lama{\sigma}{\bbb}{j} \Ev{x_1,\ldots, x_{n-1},z_k \yo}{}([\Y, X_n]_{\bbb_n}) \\
	&= \lama{\sigma}{\bbb}{j} [\y, z_k \yo]_{\bbb_n} \text{.}
	\end{align*}
	Since $\yo$ commutes with $\y$, we have
	\begin{align*}
	\Ev{x_1,\ldots, x_{n-1},z_k \yo}{}(\lama{\sigma}{\bbb}{j} X_n^{\bbb}) &=\lama{\sigma}{\bbb}{j} [\y, z_k]_{\bbb_n} \yo \\
	&= \lama{\sigma}{\bbb}{j} \big[\underbrace{\y, \ldots, \y}_{\bbb_n-k}, [\y, z_k]_k\big] \yo \\
	&= \lama{\sigma}{\bbb}{j} [\y, 1]_{\bbb_n-k} \yo \text{,}
	\end{align*}
	which is  zero if $\bbb_n > k$ and equal to  $\lama{\sigma}{\bbb}{j} \yo$ if $\bbb_n = k$.
	Thus, we have
	\[
	\Ev{x_1,\ldots, x_{n-1},z_k \yo}{}(\lama{\sigma}{\bbb}{j} X_n^{\bbb}) = \begin{cases} \lama{\sigma}{\bbb}{j} \yo,~\bbb_n = k \\ 0,~\bbb_n \neq k \end{cases} \text{.}
	\]

	Hence, 
	\begin{align*}
	&\Ev{x_1,\ldots, x_{n-1},z_k \yo}{}(f)\\
	=\,&\sum_{\sigma \in S_{n-1}} \sum_{j=1}^n \sum_{\bbb \in \BBB{r}} \Ev{x_1,\ldots, x_{n-1},z_k \yo}{} \left( \left( X_{\sigma(1)} \ldots X_{\sigma(j-1)} \right)^{\bbb} \right) \Ev{x_1,\ldots, x_{n-1},z_k \yo}{} \left( \lama{\sigma}{\bbb}{j} X_n^{\bbb} \right)  \\ 
	&\cdot \Ev{x_1,\ldots, x_{n-1},z_k \yo}{} \left( \left( X_{\sigma(j)} \ldots X_{\sigma(n-1)} \right)^{\bbb} \right) \\
	=\,& \sum_{\sigma \in S_{n-1}} \sum_{j=1}^n \sum_{\bbbi \in \BBBi{r}} \Ev{x_1,\ldots, x_{n-1},z_k \yo}{} \left( \left( X_{\sigma(1)} \ldots X_{\sigma(j-1)} \right)^{\bbbi} \right) \lama{\sigma}{\bbbi}{j} \yo  \\ 
	& \cdot \Ev{x_1,\ldots, x_{n-1},z_k \yo}{} \left( \left( X_{\sigma(j)} \ldots X_{\sigma(n-1)}\right)^{\bbbi} \right) \text{.}
	\end{align*}
	Since $\yo = \Ev{x_1,\ldots, x_{n-1}}{\yo}(\Yo)$ and the variable $X_n$ does not appear in the polynomials $X_{\sigma(1)} \ldots X_{\sigma(j-1)}$ and $X_{\sigma(j)} \ldots X_{\sigma(n-1)}$, we see that $\Ev{x_1,\ldots, x_{n-1},z_k \yo}{}(f)$ is equal to
	\begin{align*}
	\sum_{\sigma \in S_{n-1}} \sum_{j=1}^n \sum_{\bbbi \in \BBBi{r}} \lama{\sigma}{\bbbi}{j} \Ev{x_1,\ldots, x_{n-1}}{\yo} \left( \left( X_{\sigma(1)} \ldots X_{\sigma(j-1)} \right)^{\bbbi} \Yo \left( X_{\sigma(j)} \ldots X_{\sigma(n-1)} \right)^{\bbbi} \right) \text{,}
	\end{align*}
	which is exactly $\Ev{x_1,\ldots, x_{n-1}}{\yo}(g)$.
\end{proof}

We are now in a position to  make the induction step for admissible polynomials of type one.

\begin{lemma} \label{korak}
Suppose $n\ge 2$ is such that $\Ao \subseteq g(A;\y)$ holds for every nonzero admissible partially commutative polynomial
	$g \in \pcp{F}{X_1,\ldots,X_{n-1}}{\Yo,\Y}$
	(of either type one or type two).
	Then
	$\Ao \subseteq f(A; \y)$ 
	for every nonzero admissible partially commutative polynomial
	$f \in \pcp{F}{X_1,\ldots,X_{n}}{\Yo,\Y}$
	of type one.
\end{lemma}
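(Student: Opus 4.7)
The plan is to apply Lemma~\ref{x v y} to pass from $n$ to $n-1$ noncommuting variables, and then to show that the polynomial $g$ thus produced decomposes as a sum of two admissible polynomials---one of each type---so that the induction hypothesis applies to at least one nonzero piece.

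First, I would rewrite $f$ in the shape required by Lemma~\ref{x v y}. Each $\sigma \in S_n$ factors uniquely as a pair $(\tau,j)$ with $\tau \in S_{n-1}$ and $j = \sigma^{-1}(n) \in \{1,\ldots,n\}$, so
\[
  f = \sum_{\tau \in S_{n-1}} \sum_{j=1}^{n} \sum_{\bbb \in \BBB{r}} \lama{\tau}{\bbb}{j} \bigl( X_{\tau(1)} \ldots X_{\tau(j-1)} X_n X_{\tau(j)} \ldots X_{\tau(n-1)} \bigr)^{\bbb}
\]
for suitable scalars. Since $f \neq 0$, the integer $k := \min\{\bbb_n : \lama{\tau}{\bbb}{j} \neq 0 \text{ for some } \tau,j\}$ is defined and lies in $\{0,\ldots,r\}$, so Lemma~\ref{x v y} applies and produces
\[
  g = \sum_{\tau \in S_{n-1}} \sum_{j=1}^{n} \sum_{\bbbi \in \BBBi{r}} \lama{\tau}{\bbbi}{j} \bigl( X_{\tau(1)} \ldots X_{\tau(j-1)} \bigr)^{\bbbi} \Yo \bigl( X_{\tau(j)} \ldots X_{\tau(n-1)} \bigr)^{\bbbi}
\]
in $\pcp{F}{X_1,\ldots,X_{n-1}}{\Yo,\Y}$ (the variable $X_n$ does not appear) satisfying $g(A;\y) \subseteq f(A;\y)$, and the choice of $k$ guarantees that at least one $\lama{\tau}{\bbbi}{j}$ is nonzero.

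Next, I would unfold $g$ by pushing $\Yo$ to the right through the trailing block using $\Yo Z = Z \Yo + [\Yo, Z]$ applied iteratively with $Z = X_{\tau(m)}^{\bbbi}$ (recall $[\Yo, X_{\tau(m)}^{\bbbi}] = X_{\tau(m)}^{\bbbi, \tau(m)}$). This gives the identity
\[
  \bigl( X_{\tau(1)} \ldots X_{\tau(j-1)} \bigr)^{\bbbi} \Yo \bigl( X_{\tau(j)} \ldots X_{\tau(n-1)} \bigr)^{\bbbi} = \bigl( X_{\tau(1)} \ldots X_{\tau(n-1)} \bigr)^{\bbbi} \Yo + \sum_{m=j}^{n-1} \bigl( X_{\tau(1)} \ldots X_{\tau(n-1)} \bigr)^{\bbbi, \tau(m)},
\]
and after summing and reindexing one arrives at the decomposition $g = h_1 + \h{0}(g) \cdot \Yo$, where
\[
  h_1 = \sum_{\tau \in S_{n-1}} \sum_{m=1}^{n-1} \sum_{\bbbi \in \BBBi{r}} \Bigl( \sum_{j=1}^{m} \lama{\tau}{\bbbi}{j} \Bigr) \bigl( X_{\tau(1)} \ldots X_{\tau(n-1)} \bigr)^{\bbbi, \tau(m)}
\]
is an admissible type two polynomial of order $r - k$ in the $n-1$ variables $X_1, \ldots, X_{n-1}$, and $\h{0}(g) = \sum_{\tau, \bbbi} \bigl( \sum_{j=1}^n \lama{\tau}{\bbbi}{j} \bigr) (X_{\tau(1)} \ldots X_{\tau(n-1)})^{\bbbi}$ is the admissible type one polynomial of order $r-k$ in the same variables obtained by sending $\Yo \mapsto 1$.

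At least one of $\h{0}(g)$ and $h_1$ must be nonzero: if they both vanished, the coefficients $\sum_{j=1}^{n} \lama{\tau}{\bbbi}{j}$ and $\sum_{j=1}^{m} \lama{\tau}{\bbbi}{j}$ (for $m=1,\ldots,n-1$) would all be zero, and taking successive differences would force $\lama{\tau}{\bbbi}{j} = 0$ for every $j$, contradicting the choice of $k$. If $\h{0}(g) \neq 0$, the induction hypothesis applied to this type one polynomial gives $\Ao \subseteq \h{0}(g)(A;\y)$, and combining with \eqref{gg} and $g(A;\y) \subseteq f(A;\y)$ yields $\Ao \subseteq f(A;\y)$; if instead $\h{0}(g) = 0$, then $g = h_1$ is itself a nonzero admissible type two polynomial in $n-1$ variables and the induction hypothesis directly gives $\Ao \subseteq g(A;\y) \subseteq f(A;\y)$. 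The only step requiring genuine care is the commutator expansion producing the decomposition above; everything else is bookkeeping, and notably Proposition~\ref{grozna trditev} does not appear to be needed for the type one step (it presumably enters only in the analogous induction step for polynomials of type two).
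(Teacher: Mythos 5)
Your proof is correct and uses the same essential ingredients as the paper: Lemma~\ref{x v y}, the homomorphism $\ho$, and the commutator expansion $[\Yo, X_{\tau(m)}^{\bbbi}] = X_{\tau(m)}^{\bbbi,\tau(m)}$ producing a type two polynomial in $n-1$ variables. The one stylistic difference is that the paper argues by contradiction (assume $\Ao \not\subseteq f(A;\y)$, deduce $\ho(g)=0$, then rewrite $g$ as $h_1$ and force all coefficients to vanish), whereas you establish the unconditional identity $g = h_1 + \ho(g)\cdot\Yo$ first and then run a direct two-case argument; this is a modest but genuine clean-up, and your closing remark that Proposition~\ref{grozna trditev} is not needed here (only in the type two step of Theorem~\ref{mt}) is accurate.
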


\begin{proof}
Suppose the lemma is not true. Then there exists a polynomial $f$ 
of the form 
	\[
	f = \sum_{\sigma \in S_n} \sum_{\bbb \in \BBB{r}}\lamt{\sigma}{\bbb} \left( X_{\sigma(1)} X_{\sigma(2)} \ldots X_{\sigma(n)} \right)^{\bbb} 
	\]
	where $\lamt{\sigma}{\bbb} \in F$ are not all zero, 
such that
	 $f(A; \y)$ does not contain $\Ao$. 
We rewrite $f$ as
	\[
	f = \sum_{\sigma \in S_{n-1}} \sum_{j=1}^n \sum_{\bbb \in \BBB{r}}\lama{\sigma}{\bbb}{j} \left( X_{\sigma(1)} \ldots X_{\sigma(j-1)} X_{n} X_{\sigma(j)} \ldots X_{\sigma(n-1)} \right)^{\bbb} \text{,}
	\]
	for some $\lama{\sigma}{\bbb}{j} \in F$, not all zero. 
	
	The set
	\[
	K = \{ k \in \No \mid \bbb_n = k ~\text{and}~ \lama{\sigma}{ \bbb}{j} \neq 0 \text{ for some $\sigma \in S_{n-1}$, $\bbb \in \BBB{r}$, $j = 1, \ldots, n$}\}
	\]
	is nonempty. Let $k$ be the least number in $K$. Then, for every $\bbb \in \BBB{r}$, $\bbb_n < k$ implies $\lama{\sigma}{\bbb}{ j} = 0$ for every  $\sigma \in S_{n-1}$ and every $j \in \{1, \ldots, n\}$.
	By Lemma \ref{x v y}, $g(A; \y) \subseteq f(A; \y)$ holds for the polynomial
	$g \in \pcp{F}{X_1,\ldots,X_{n-1}}{\Yo,\Y}$
	defined by
	\[
	g = \sum_{\sigma \in S_{n-1}} \sum_{j=1}^n \sum_{\bbbi \in \BBBi{r}}\lama{\sigma}{\bbbi}{j} \left( X_{\sigma(1)} \ldots X_{\sigma(j-1)} \right)^{\bbbi} \Yo \left( X_{\sigma(j)} \ldots X_{\sigma(n-1)} \right)^{\bbbi} \text{,}
	\]
	where $\BBBi{r} = \{ \bbbi \in \BBB{r} \mid \bbbi_n = k\}$.
	Since the variable $X_n$ does not occur in this expression, we can replace $\bbbi$ by the sequence $\bbb$ consisting of the first $n-1$ terms of $\bbbi$. Such sequences  $\bbb$ are exactly the elements of $\BBB{r- k}$. Thus, we have
	\begin{align*}
	g = \sum_{\sigma \in S_{n-1}} \sum_{j=1}^n \sum_{\bbb \in \BBB{r-k}}\lama{\sigma}{\bbbi}{j} \left( X_{\sigma(1)} \ldots X_{\sigma(j-1)} \right)^{\bbb} \Yo \left( X_{\sigma(j)} \ldots X_{\sigma(n-1)} \right)^{\bbb} \text{,}
	\end{align*}
	where $\BBB{r-k}$ consists of sequences of length $n-1$ and $\bbbi$ is the sequence of length $n$ obtained by adding $k$ to the end of $\bbb$.

	The polynomial $g$ may not be admissible. By using the homomorphism $\ho$ (defined in Section \ref{s2}), we obtain
	\[
	\ho(g) = \sum_{\sigma \in S_{n-1}} \sum_{\bbb \in \BBB{r-k}} \left( \sum_{j=1}^n \lama{\sigma}{\bbbi}{j} \right) \left( X_{\sigma(1)} \ldots X_{\sigma(j-1)} X_{\sigma(j)}  \ldots X_{\sigma(n-1)} \right)^{\bbb} \text{.}
	\]
	As we  see, $\ho(g)$ is an admissible partially commutative polynomial (of type one) in $n-1$ noncommuting variables.
	Since the image of $\ho(g)$ is a subset of $f(A; \y)$ (see \eqref{gg}), it cannot contain $\Ao$. Hence, by our assumption, $\ho(g)$ is zero.
	By Proposition \ref{lin neodvisnost},  $\sum_{j=1}^n \lama{\sigma}{\bbbi}{j} = 0$ for every  $\sigma \in S_{n-1}$ and every $\bbb \in \BBB{r-k}$.

	Considering $\lama{\sigma}{\bbbi}{n} = - \sum_{j=1}^{n-1} \lama{\sigma}{\bbbi}{j}$, we can write  $g$ as
	\begin{align*}
	g = \sum_{\sigma \in S_{n-1}} \sum_{\bbb \in \BBB{r-k}} \sum_{j=1}^{n-1} \lama{\sigma}{\bbbi}{j} \Big( &\left( X_{\sigma(1)} \ldots X_{\sigma(j-1)} \right)^{\bbb} \Yo \left( X_{\sigma(j)}  \ldots X_{\sigma(n-1)} \right)^{\bbb} \\
	-&  \left( X_{\sigma(1)} \ldots X_{\sigma(j-1)} X_{\sigma(j)}  \ldots X_{\sigma(n-1)} \right)^{\bbb} \Yo \Big) \text{,}
	\end{align*}
	which further equals
	\begin{align*}
	g = \sum_{\sigma \in S_{n-1}} \sum_{\bbb \in \BBB{r-k}} \sum_{j=1}^{n-1} \lama{\sigma}{\bbbi}{j} \left( X_{\sigma(1)} \ldots X_{\sigma(j-1)} \right)^{\bbb} \left[ \Yo, \left( X_{\sigma(j)}  \ldots X_{\sigma(n-1)} \right)^{\bbb} \right] \text{.}
	\end{align*}
Making use of the well-known formula
	$[X, YZ] = [X, Y]Z + Y[X,Z]$,
we see that
	\begin{align*}
	&\left[ \Yo, \left( X_{\sigma(j)}  \ldots X_{\sigma(n-1)} \right)^{\bbb} \right]\\
	=\,& \sum_{i = j}^{n-1} \left( X_{\sigma(j)}  \ldots X_{\sigma(i-1)} \right)^{\bbb} \left[ \Yo, X_{\sigma(i)}^{\bbb} \right] \left( X_{\sigma(i+1)}  \ldots X_{\sigma(n-1)} \right)^{\bbb} \text{.}
	\end{align*}
	Since
	\begin{align*}
	\left[ \Yo, X_{\sigma(i)}^{\bbb} \right]= X_{\sigma(i)}^{\bbb, \sigma(i)} \text{,}
	\end{align*}
	we have
	\begin{align*}
	\left[ \Yo, \left( X_{\sigma(j)}  \ldots X_{\sigma(n-1)} \right)^{\bbb} \right] = \sum_{i = j}^{n-1} \left( X_{\sigma(j)} \ldots X_{\sigma(n-1)} \right)^{\bbb, \sigma(i)} \text{.}
	\end{align*}
	Thus,
	\begin{align*}
	g&= \sum_{\sigma \in S_{n-1}} \sum_{\bbb \in \BBB{r-k}} \sum_{j=1}^{n-1} \lama{\sigma}{\bbbi}{j} \left( X_{\sigma(1)} \ldots X_{\sigma(j-1)} \right)^{\bbb} \sum_{i = j}^{n-1} \left( X_{\sigma(j)} \ldots X_{\sigma(n-1)} \right)^{\bbb,\sigma(i)} \\
	&= \sum_{\sigma \in S_{n-1}} \sum_{\bbb \in \BBB{r-k}} \sum_{j=1}^{n-1} \sum_{i = j}^{n-1} \lama{\sigma}{\bbbi}{j} \left( X_{\sigma(1)} \ldots X_{\sigma(n-1)} \right)^{\bbb,\sigma(i)}\text{.}
	\end{align*}
By changing the order of summation, we obtain
	\begin{align*}
	g = \sum_{\sigma \in S_{n-1}} \sum_{\bbb \in \BBB{r-k}} \sum_{i = 1}^{n-1} \left( \sum_{j=1}^{i} \lama{\sigma}{\bbbi}{j} \right) \left( X_{\sigma(1)} \ldots X_{\sigma(n-1)} \right)^{\bbb, \sigma(i)} \text{.}
	\end{align*}
	As we can see, $g$ is an admissible partially commutative polynomial (of type two) in $n-1$ noncommuting variables.
	Since the image of $g$ is a subset of $f(A; \y)$, it cannot contain $\Ao$. Hence, by our assumption, $g$ is zero.
	By Proposition \ref{lin neodvisnost}, we have $\sum_{j=1}^i \lama{\sigma}{\bbbi}{j} = 0$ for every $\sigma \in S_{n-1}$, every $\bbb \in \BBB{r-k}$, and every $ i = 1,\ldots, n$ (the $i=n$ case follows from the first part of the proof). This obviously implies that $\lama{\sigma}{\bbb'}{j} = 0$ for every  $\sigma \in S_{n-1}$, every $\bbbi \in \BBBi{r}$, and every $j = 1,\ldots, n$. However, this is in  contradiction with $k \in K$.
\end{proof}

It remains to consider admissible polynomials of type two. The main idea of our approach is to use Lemma \ref{korak} by applying the homomorphism $\h{k}$. Since the image of $\h{k}$ of an admissible polynomial is not necessarily  admissible,
our goal is to write
$\h{k} \left( \left( X_{\sigma(1)} \ldots X_{\sigma(n)} \right)^{\bbb, \sigma(i)}\right)$
as a linear combination of the polynomials
$\left( X_{\sigma(1)}\ldots X_{\sigma(n)}\right)^{\bbb+\ccc} \Y^{t}$
for some sequences $\ccc$ and nonnegative integers $t$.
The following lemmas are devoted to this purpose. We remark that our calculations are based on
\begin{align} \label{c}
\Y X_i^\bbb = [\Y, X_i^\bbb] + X_i^\bbb \Y = X_i^{\bbb + \e{i}} + X_i^\bbb \Y \text{.}
\end{align}

\begin{lemma} \label{lema43}
	For any $k \in \N$, we have
	\begin{align*} \label{vsota v lemi}
	\h{k} (X_i^{\bbb, i}) = \sum_{s = 1}^k {k \choose s} X_i^{\bbb + s \e{i}} \Y^{k-s} \text{.}
	\end{align*}
\end{lemma}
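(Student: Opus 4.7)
The plan is to first observe that $\h{k}$ fixes $X_i$ and $\Y$ while sending $\Yo$ to $\Y^k$, and hence
$$\h{k}(X_i^{\bbb, i}) = \h{k}([\Yo, X_i^{\bbb}]) = [\Y^k, X_i^{\bbb}].$$
The lemma is therefore reduced to establishing the purely combinatorial commutator identity
$$[\Y^k, X_i^{\bbb}] = \sum_{s=1}^{k} \binom{k}{s} X_i^{\bbb + s\e{i}} \Y^{k-s}.$$

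The cleanest route is a binomial-theorem argument for commuting operators. Let $L$ and $R$ denote left and right multiplication by $\Y$ on the coproduct algebra, and set $d = L - R$, so that $d$ is the inner derivation $x \mapsto [\Y, x]$. A direct check shows that $d$ and $R$ commute, and clearly $L = R + d$. Since $R$ and $d$ commute, the standard binomial expansion applies and gives
$$L^k = \sum_{s=0}^{k} \binom{k}{s} d^s R^{k-s}.$$
Evaluating both sides at $X_i^{\bbb}$ and using \eqref{c} in the form $d(X_i^{\bbb}) = X_i^{\bbb + \e{i}}$, which iterates to $d^s(X_i^{\bbb}) = X_i^{\bbb + s\e{i}}$, yields
$$\Y^k X_i^{\bbb} = \sum_{s=0}^{k} \binom{k}{s} X_i^{\bbb + s\e{i}} \Y^{k-s}.$$
Subtracting the $s = 0$ summand $X_i^{\bbb} \Y^k$ leaves exactly $[\Y^k, X_i^{\bbb}]$ on the left and the desired sum on the right.

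Alternatively, the identity can be proved by a straightforward induction on $k$: the base case $k = 1$ is a direct restatement of \eqref{c}, and the inductive step rests on $[\Y^{k+1}, y] = \Y[\Y^k, y] + [\Y, y]\Y^k$ followed by a reindexing and Pascal's relation $\binom{k}{s-1} + \binom{k}{s} = \binom{k+1}{s}$. I do not anticipate any real obstacle here; once the lemma is recognised as a commutator identity in $\Y$ and $X_i^{\bbb}$, it is essentially a formal calculation, and either the operator-theoretic or the inductive approach dispatches it in a few lines.
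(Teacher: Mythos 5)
Your proposal is correct, and your primary argument takes a genuinely different route from the paper. The paper proves the lemma by induction on $k$: it starts from the identity $[\Y^k, X_i^\bbb] = \Y[\Y^{k-1}, X_i^\bbb] + [\Y, X_i^\bbb]\Y^{k-1}$, applies the inductive hypothesis, uses \eqref{c} to push the outer $\Y$ through, reindexes, and invokes Pascal's rule---exactly the ``alternative'' argument you sketch at the end. Your main approach, by contrast, packages the whole computation into the binomial theorem for the commuting operators $d$ (ad$_\Y$, which equals $L-R$) and $R$ (right multiplication by $\Y$): since $L = d + R$ and $[d,R]=0$, one gets $L^k = \sum_{s=0}^k \binom{k}{s} d^s R^{k-s}$, and applying this to $X_i^\bbb$ and dropping the $s=0$ term gives the identity in one stroke. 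This is cleaner and avoids manipulating binomial coefficients by hand; it also makes transparent why binomial coefficients appear at all. The paper's inductive proof is more elementary and self-contained, and its intermediate manipulations (reindexing, splitting the sum, Pascal's identity) are of the same flavour as the computations in the subsequent Lemma \ref{lema44}, so it fits the local style of the section. Both are valid; the key facts you rely on ($[d,R]=0$, $d^s(X_i^\bbb) = X_i^{\bbb+s\e{i}}$) check out directly from the definitions and from \eqref{c}.
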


\begin{proof}
	We proceed by induction on $k$. Since
	$\h{1}(X_i^{\bbb, i}) =  X_i^{\bbb+\e{i}}$,
	the lemma is obviously true for $k=1$, so assume that $k > 1$ and that the lemma is true for $k-1$. We have
	$$ \h{k}(X_i^{\bbb, i})= [\Y^k, X_i^\bbb] = \Y[\Y^{k-1}, X_i^\bbb] + [\Y, X_i^\bbb] \Y^{k-1} \text{.}$$
	As $[\Y^{k-1}, X_i^\bbb] = \h{k-1}(X_i^{\bbb, i})$, it follows from the induction hypothesis  that
	\begin{align*}
	  \h{k}(X_i^{\bbb, i}) = \Y\sum_{s=1}^{k-1} {k-1 \choose s} X_i^{\bbb + s\e{i}} \Y^{k-1-s} + X_i^{\bbb + \e{i}} \Y^{k-1}. 
		\end{align*}
		Using \eqref{c}, it follows that
	\begin{align*}
	  \h{k}(X_i^{\bbb, i}) =\,&\sum_{s=1}^{k-1} {k-1 \choose s} X_i^{\bbb + (s+1)\e{i}} \Y^{k-1-s} + \sum_{s=1}^{k-1} {k-1 \choose s} X_i^{\bbb + s\e{i}} \Y^{k-s} \\
	  &+ X_i^{\bbb + \e{i}} \Y^{k-1} \text{.}
	\end{align*}
	By changing the index of summation in the first sum and splitting the second sum, we obtain
	\begin{align*}
	\h{k}(X_i^{\bbb, i})
	=\,&\sum_{s=2}^{k} {k-1 \choose s-1} X_i^{\bbb + s\e{i}} \Y^{k-s} + \sum_{s=2}^{k-1} {k-1 \choose s} X_i^{\bbb + s\e{i}} \Y^{k-s} \\
	&+ k X_i^{\bbb + \e{i}} \Y^{k-1} \text{,}
	\end{align*}
and hence
	\begin{align*}
	\h{k}(X_i^{\bbb, i})
	=\,&X_i^{\bbb + k \e{i}} + \sum_{s=2}^{k-1} \left( {k-1 \choose s-1} + {k-1 \choose s} \right) X_i^{\bbb + s\e{i}} \Y^{k-s} \\
	&+ k X_i^{\bbb + \e{i}} \Y^{k-1} \text{.}
	\end{align*}
	Using 
	$${k-1 \choose s-1} + {k-1 \choose s} = {k \choose s}$$
	we obtain the conclusion of the lemma.
	\end{proof}

We  introduce the set
\[
\DDD{t}{i} = \left\{ \ddd = (\ddd_1, \ldots, \ddd_n) \in \No^n \,\, \middle\vert \,\, \ddd_1 = \ldots = \ddd_{i-1} = 0,~ \sum_{j=1}^n \ddd_{j} = t\right\} \text{.}
\]
Note that $\DDD{t}{i}$ is in bijective correspondence with $\sqcup_{s=0}^t \DDD{t-s}{i+1}$, the disjoint union of the sets $\DDD{t-s}{i+1}$,
via $\ddd = s \e{i} + \ddd'$.
We  will use this in the proof of the following lemma.

\begin{lemma} \label{lema44}
	For any $k \in \N$ and $s \in \{ 1, \ldots, k \}$, we have
	\begin{align*}
	\Y^{k-s} \left( X_{i+1} \ldots X_{n} \right)^{\bbb} = \sum_{t = s}^{k} {k - s \choose t - s} \sum_{\ddd \in \DDD{t - s}{i+1}} { t - s \choose \ddd_{i+1} ,\ldots, \ddd_n} \left( X_{i+1}\ldots X_{n}\right)^{\bbb+\ddd} \Y^{k - t} \text{.}
	\end{align*}
\end{lemma}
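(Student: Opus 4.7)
The plan is to induct on $n - i$, exploiting the bijection $\DDD{t}{i+1} \leftrightarrow \bigsqcup_{a=0}^{t} \DDD{t-a}{i+2}$ pointed out in the paragraph preceding the lemma. In the base case $i = n$, the product $(X_{i+1} \ldots X_n)^{\bbb}$ is empty and equals $1$, so the left-hand side reduces to $\Y^{k-s}$; on the right, $\DDD{t-s}{n+1}$ is empty unless $t = s$, in which case it consists only of the zero sequence, contributing the single term $\Y^{k-s}$. Both sides agree.

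For the induction step, I would first record the one-variable identity
\[
  \Y^{m} X_{j}^{\bbb} = \sum_{a=0}^{m} \binom{m}{a} X_{j}^{\bbb + a \e{j}} \Y^{m-a},
\]
which follows from Lemma \ref{lema43} by moving $X_{j}^{\bbb}\Y^{m}$ across the identity $\h{m}(X_{j}^{\bbb, j}) = [\Y^{m}, X_{j}^{\bbb}]$. Taking $m = k - s$, I would split $(X_{i+1} \ldots X_n)^{\bbb} = X_{i+1}^{\bbb} (X_{i+2} \ldots X_n)^{\bbb}$, commute $\Y^{k-s}$ past $X_{i+1}^{\bbb}$ using this identity, and then apply the induction hypothesis to each of the resulting tails $\Y^{k-s-a}(X_{i+2} \ldots X_n)^{\bbb}$.

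This yields a triple sum indexed by $a \in \{0,\ldots,k-s\}$, a new variable $t' \in \{0,\ldots,k-s-a\}$ coming from the induction hypothesis, and $\ddd' \in \DDD{t'}{i+2}$. The next step is to re-index by setting $\ddd = a\e{i+1} + \ddd'$ and $u = a + t'$, so that $\ddd \in \DDD{u}{i+1}$ with $\ddd_{i+1} = a$, exactly as in the bijection above. The factor $X_{i+1}^{\bbb + a \e{i+1}}$ combines with $(X_{i+2} \ldots X_n)^{\bbb + \ddd'}$ to produce $(X_{i+1} \ldots X_n)^{\bbb + \ddd}$, because $X_{j}^{\bbb + \ddd}$ depends only on $(\bbb + \ddd)_j$. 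The binomial coefficients collapse via
\[
  \binom{k-s}{a}\binom{k-s-a}{t'}\binom{t'}{\ddd'_{i+2}, \ldots, \ddd'_n} = \binom{k-s}{u}\binom{u}{\ddd_{i+1}, \ldots, \ddd_n},
\]
which is a routine multinomial identity (both sides expand to $(k-s)!/(a!\,\ddd'_{i+2}!\cdots\ddd'_n!\,(k-s-u)!)$). Setting $t = s + u$ converts the outer summation into the one in the statement.

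The main obstacle is purely bookkeeping: aligning the index shifts with the bijection on the $\DDD{\cdot}{\cdot}$ sets, verifying that the three factors reassemble into $(X_{i+1} \ldots X_n)^{\bbb + \ddd}$, and checking the multinomial identity above. Neither step is delicate, but care is needed to ensure that the summation ranges match up exactly after re-indexing.
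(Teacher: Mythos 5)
Your proof is correct and takes essentially the same route as the paper's: induction on $n-i$, commuting $\Y^{k-s}$ past $X_{i+1}^{\bbb}$ one factor at a time, invoking the inductive hypothesis, and reindexing through the bijection between $\DDD{t-s}{i+1}$ and $\bigsqcup_{a=0}^{t-s}\DDD{t-s-a}{i+2}$ together with the multinomial collapse. The only cosmetic differences are that you anchor the induction at $n-i=0$ (the empty product) rather than $n-i=1$, and you derive the single-variable commutation formula from Lemma \ref{lema43} instead of by direct repeated use of \eqref{c}; both observations are sound.
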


\begin{proof}
	We proceed by induction on $n-i$. For $n-i = 1$,  we see by repeated use of \eqref{c} that
	\begin{align} \label{prejsnja lema}
	  \Y^{k-s} X_{n}^{\bbb} = \sum_{t=0}^{k-s} {k -s \choose t} X_n^{b + t \e{n}} \Y^{k-s-t} \text{.}
	\end{align}
	By changing the index  of summation and considering $\DDD{t-s}{n} = \{ (t - s) \e{n}\}$, we see that this is further  equal to
	\[
	  \Y^{k-s} X_{n}^{\bbb} = \sum_{t = s}^{k} {k - s \choose t - s} \sum_{\ddd \in \DDD{t - s}{n}} { t - s \choose \ddd_n} X_{n}^{\bbb+\ddd} \Y^{k - t} \text{,}
	\]
	which proves the base case.
	
	Now let $n-i > 1$ and assume that the lemma is true for all numbers smaller than $n-i$. Use \eqref{prejsnja lema} and the induction hypothesis to obtain
	\begin{align*}
	&\Y^{k-s} \left( X_{i+1} \ldots X_{n} \right)^{\bbb}\\
	=\,& \sum_{s' = 0}^{k-s} {k - s \choose s'} X_{i+1}^{\bbb + s' \e{i+1}} \Y^{k-s-s'}\left( X_{i+2} \ldots X_{n} \right)^{\bbb} \\
	 =\,& \sum_{s' = 0}^{k-s} {k - s \choose s'} X_{i+1}^{\bbb + s' \e{i+1}} \\
	  & \cdot \sum_{t = s + s'}^{k} {k - s - s' \choose t - s - s'} \sum_{\ddd \in \DDD{t - s -s'}{i+2}} { t - s - s' \choose \ddd_{i+2} ,\ldots, \ddd_n} \left( X_{i+2}\ldots X_{n}\right)^{\bbb+\ddd} \Y^{k - t}
	  \text{.}
	\end{align*}
	Since
	\begin{align}\label{bin}
		{k - s \choose s'} {k - s - s' \choose t - s - s'} { t - s - s' \choose \ddd_{i+2} ,\ldots, \ddd_n} = {k - s \choose t - s} {t - s \choose s', \ddd_{i+2} ,\ldots, \ddd_n} \text{,}
	\end{align}
	we have
	\begin{align*}
		&\Y^{k-s} \left( X_{i+1} \ldots X_{n} \right)^{\bbb}\\
		=\,& \sum_{s' = 0}^{k-s} \sum_{t = s + s'}^{k} {k - s \choose t-s} \sum_{\ddd \in \DDD{t - s -s'}{i+2}} { t - s \choose s', \ddd_{i+2} ,\ldots, \ddd_n} \left( X_{i+1}\ldots X_{n}\right)^{\bbb+s'\e{i+1}+\ddd} \Y^{k - t}
		\text{.}
	\end{align*}
	Change the order of summation and use the aforementioned bijective correspondence between $\DDD{t-s}{i+1}$ and $\sqcup_{s'=0}^{t-s} \DDD{t-s-s'}{i+2}$ to obtain
	\begin{align*}
		&\Y^{k-s} \left( X_{i+1} \ldots X_{n} \right)^{\bbb}\\
		=\,& \sum_{t = s}^{k} {k - s \choose t-s}\sum_{s' = 0}^{t-s} \sum_{\ddd \in \DDD{t - s -s'}{i+2}} { t - s \choose s', \ddd_{i+2} ,\ldots, \ddd_n} \left( X_{i+1}\ldots X_{n}\right)^{\bbb+s'\e{i+1}+\ddd} \Y^{k - t} \\
		=\,& \sum_{t = s}^{k} {k - s \choose t-s} \sum_{\ddd \in \DDD{t - s}{i+1}} { t - s \choose \ddd_{i+1}, \ddd_{i+2} ,\ldots, \ddd_n} \left( X_{i+1}\ldots X_{n}\right)^{\bbb+\ddd} \Y^{k - t}
		\text{,}
	\end{align*}
	which concludes the induction step.
\end{proof}

Recall from Section \ref{s3} that
\[
\CCC{\sigma}{t}{i} = \left\{ \ccc = (\ccc_1, \ldots, \ccc_n) \in \No^n \,\, \middle\vert \,\, \ccc_{\sigma(1)} = \ldots = \ccc_{\sigma(i-1)} = 0,~\ccc_{\sigma(i)} \geq 1,~ \sum_{j=1}^n \ccc_{j} = t\right\}. 
\]
The set $\CCC{{\rm id}}{t}{i}$ is in bijective correspondence with $\sqcup_{s=1}^t \DDD{t-s}{i+1}$
via $\ccc = s \e{i} + \ddd$.	
Recall also that
	\[
	\skal{\sigma}{\ccc}{i} = {\sum_{j=1}^n \ccc_j \choose \ccc_{\sigma(i)}, \ldots, \ccc_{\sigma(n)}} 
	\]
and set 
	\[
	\pol{\sigma}{\bbb}{i}{t} = \sum_{\ccc \in \CCC{\sigma}{t}{i}} \skal{\sigma}{\ccc}{i}  \left( X_{\sigma(1)} \ldots X_{\sigma(n)} \right)^{\bbb+ \ccc}.
	\]

\begin{lemma} \label{normalna oblika}
	For any $r \in \No$, $\sigma \in S_n$, $\bbb \in \BBB{r}$, $i \in \{1, \ldots,  n\}$, and $k \in \N$, we have
	\begin{align*}
	\h{k} \left( \left( X_{\sigma(1)} \ldots X_{\sigma(n)} \right)^{\bbb, \sigma(i)}\right) = \sum_{t=1}^k {k \choose t} \pol{\sigma}{\bbb}{i}{t} \Y^{k-t}\text{.}
	\end{align*}
\end{lemma}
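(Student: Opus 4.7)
Without loss of generality we may assume $\sigma = \mathrm{id}$, since the statement is invariant under relabeling the noncommuting variables. The plan is to reduce the claim to a direct combinatorial consequence of Lemmas \ref{lema43} and \ref{lema44}, which describe exactly the two operations we need: expanding $\h{k}$ on a single bracketed variable, and commuting a power of $\Y$ to the right past a product of bracketed variables.

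First I would observe that, since $\h{k}$ fixes each $X_j$ and $\Y$ and only acts on $\Yo$, and since $\Yo$ appears in $(X_1 \ldots X_n)^{\bbb, i}$ only inside the factor $X_i^{\bbb, i}$, one has
\[
  \h{k}\bigl((X_1 \ldots X_n)^{\bbb, i}\bigr) = X_1^{\bbb} \ldots X_{i-1}^{\bbb} \cdot \h{k}(X_i^{\bbb, i}) \cdot X_{i+1}^{\bbb} \ldots X_n^{\bbb}.
\]
Applying Lemma \ref{lema43} rewrites this as $\sum_{s=1}^{k} \binom{k}{s} X_1^{\bbb} \ldots X_{i-1}^{\bbb} X_i^{\bbb + s\e{i}} \Y^{k-s} (X_{i+1} \ldots X_n)^{\bbb}$. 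Next I apply Lemma \ref{lema44} to each $\Y^{k-s} (X_{i+1} \ldots X_n)^{\bbb}$, which produces an inner double sum over $t \in \{s, \ldots, k\}$ and $\ddd \in \DDD{t-s}{i+1}$ with coefficient $\binom{k-s}{t-s} \binom{t-s}{\ddd_{i+1}, \ldots, \ddd_n}$ and trailing $\Y^{k-t}$.

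The second step is combinatorial tidying. Since $\ddd \in \DDD{t-s}{i+1}$ satisfies $\ddd_1 = \ldots = \ddd_i = 0$, the initial factors $X_j^{\bbb}$ for $j \le i-1$ equal $X_j^{\bbb + \ddd}$, and $X_i^{\bbb + s\e{i}} = X_i^{\bbb + s\e{i} + \ddd}$. Hence the whole monomial assembles into $(X_1 \ldots X_n)^{\bbb + s\e{i} + \ddd}$. Setting $\ccc = s\e{i} + \ddd$ gives exactly the bijection between pairs $(s, \ddd)$ with $s \ge 1$, $\ddd \in \DDD{t-s}{i+1}$ and elements $\ccc \in \CCCt{t}{i}$ recorded in the paper just before the statement of the lemma.

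Finally I would collect the coefficients. After interchanging the order of summation, the coefficient of $(X_1 \ldots X_n)^{\bbb + \ccc} \Y^{k-t}$ is
\[
  \binom{k}{s} \binom{k-s}{t-s} \binom{t-s}{\ddd_{i+1}, \ldots, \ddd_n},
\]
which by the identity \eqref{bin} equals $\binom{k}{t} \binom{t}{s, \ddd_{i+1}, \ldots, \ddd_n} = \binom{k}{t} \skalt{\ccc}{i}$. Summing over $\ccc \in \CCCt{t}{i}$ and $t \in \{1, \ldots, k\}$ produces $\sum_{t=1}^{k} \binom{k}{t} \pol{\mathrm{id}}{\bbb}{i}{t} \Y^{k-t}$, which is the claimed formula. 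The only potentially delicate point is the coefficient manipulation and the verification that the bijection $(s, \ddd) \leftrightarrow \ccc$ matches the multinomial weights correctly; both amount to the identity \eqref{bin} already invoked in the proof of Lemma \ref{lema44}, so no real obstacle arises.
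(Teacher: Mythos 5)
Your proof is correct and follows essentially the same route as the paper: factor out the two outer blocks, apply Lemma \ref{lema43} to $\h{k}(X_i^{\bbb,i})$, apply Lemma \ref{lema44} to push $\Y^{k-s}$ through the right-hand block, then use the identity \eqref{bin} together with the bijection $\ccc = s\e{i} + \ddd$ between $\CCCt{t}{i}$ and $\bigsqcup_{s=1}^t \DDD{t-s}{i+1}$ to collect coefficients. The only thing you make more explicit than the paper is the (true but routine) observation that $\ddd_j = 0$ for $j \le i$ allows the monomial to be reassembled as $(X_1 \ldots X_n)^{\bbb + s\e{i} + \ddd}$.
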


\begin{proof}
	We will prove the lemma only for the case where $\sigma={\rm id}$
	(if   $\sigma\ne {\rm id}$, just permute the indices in the  formulas that follow). First, we write
	\[
	\h{k} \left( \left( X_{1} \ldots X_{n} \right)^{\bbb, i}\right)
	= \left( X_{1} \ldots X_{i-1} \right)^{\bbb} \h{k} \left( X_{i}^{\bbb, i}\right) \left( X_{i+1} \ldots X_{n} \right)^{\bbb}
	\]
	and apply Lemma \ref{lema43} to obtain
	\[
	\h{k} \left( \left( X_{1} \ldots X_{n} \right)^{\bbb, i}\right)
	= \left( X_{1} \ldots X_{i-1} \right)^{\bbb} \sum_{s = 1}^k {k \choose s} X_i^{\bbb + s \e{i}} \Y^{k-s} \left( X_{i+1} \ldots X_{n} \right)^{\bbb} \text{.}
	\]
	By Lemma \ref{lema44}, 
	\begin{align*}
		\h{k} \left( \left( X_{1} \ldots X_{n} \right)^{\bbb, i}\right)
		=\,& \left( X_{1} \ldots X_{i-1} \right)^{\bbb} \sum_{s = 1}^k {k \choose s} X_i^{\bbb + s \e{i}} \\
		&\cdot\sum_{t = s}^{k} {k - s \choose t - s} \sum_{\ddd \in \DDD{t - s}{i+1}} { t - s \choose \ddd_{i+1} ,\ldots, \ddd_n} \left( X_{i+1}\ldots X_{n}\right)^{\bbb+\ddd} \Y^{k-t} \text{.}
	\end{align*}
	Applying \eqref{bin},
		we have
	\begin{align*}
		&\h{k} \left( \left( X_{1} \ldots X_{n} \right)^{\bbb, i}\right) \\
		=\,& \sum_{s = 1}^k \sum_{t = s}^{k} {k \choose t} \sum_{\ddd \in \DDD{t - s}{i+1}} { t \choose s, \ddd_{i+1} ,\ldots, \ddd_n} \left( X_{1}\ldots X_{n}\right)^{\bbb+s\e{i}+\ddd} \Y^{k-t} \text{.}
	\end{align*}
	Change the order of summation and use the aforementioned bijective correspondence between $\CCC{{\rm id}}{t}{i}$ and
	$\sqcup_{s=1}^t \DDD{t-s}{i+1}$ to obtain
	\begin{align*}
		&\h{k} \left( \left( X_{1} \ldots X_{n} \right)^{\bbb, i}\right) \\
		=\,& \sum_{t = 1}^k {k \choose t} \sum_{s = 1}^{t} \sum_{\ddd \in \DDD{t - s}{i+1}} { t \choose s, \ddd_{i+1} ,\ldots, \ddd_n} \left( X_{1}\ldots X_{n}\right)^{\bbb+s\e{i}+\ddd} \Y^{k-t} \\
		=\,& \sum_{t = 1}^k {k \choose t} \sum_{\ccc \in \CCC{{\rm id}}{t}{i}} { t \choose \ccc_i, \ccc_{i+1} ,\ldots, \ccc_n} \left( X_{1}\ldots X_{n}\right)^{\bbb+\ccc} \Y^{k-t}\text{,}
	\end{align*}
	which concludes this proof.
	\end{proof}

\begin{theorem} \label{mt} Let $A$ be a unital algebra over a field $F$. Suppose $A$ contains an element $\y$ such that $1\in \Ao=\bigcap_{k\in\N}[\y,A]_k$. Then 
	$\Ao\subseteq f(A;\y)$ for every nonzero admissible partially commutative polynomial $f$.
\end{theorem}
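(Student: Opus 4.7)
The plan is to prove Theorem \ref{mt} by induction on $n$, the number of noncommuting variables. The base case $n=1$ is Lemma \ref{baza}. For the inductive step, assume the theorem for every admissible partially commutative polynomial in $n-1$ variables; Lemma \ref{korak} then immediately gives the conclusion for every nonzero admissible polynomial of type one in $n$ variables. The remaining task is to establish it for polynomials of type two in $n$ variables, which I would argue by contradiction.

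Suppose $f$ is a nonzero admissible polynomial of type two in $n$ variables, written as in \eqref{ttwo}, with $\Ao \not\subseteq f(A;\y)$. For each $k\in\N$, apply $\h{k}$ to $f$ and invoke Lemma \ref{normalna oblika} term-by-term to write
\[
  \h{k}(f) \;=\; \sum_{t=1}^{k}\binom{k}{t}\,p_t\,\Y^{k-t},
\]
where, after regrouping via $\bbb'=\bbb+\ccc$, $p_t$ is the type one admissible polynomial of order $r+t$ whose coefficient of $\left(X_{\sigma(1)}\ldots X_{\sigma(n)}\right)^{\bbb'}$ equals
\[
  \sum_{i=1}^n \sum_{\ccc\in\CCC{\sigma}{t}{i}} \skal{\sigma}{\ccc}{i}\,\lam{\sigma}{\bbb'-\ccc}{i}.
\]
The critical observation is that $p_t$ does not depend on $k$.

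The core step is a secondary induction on $t$ showing that $p_t = 0$ for every $t \in \N$. If $p_1=\cdots=p_{t-1}=0$, then the displayed expansion at $k=t$ collapses to $\h{t}(f)=p_t$, so \eqref{gg} yields $p_t(A;\y)\subseteq f(A;\y)$. Since $p_t$ is a type one admissible polynomial in $n$ variables, Lemma \ref{korak} (which is available via the outer induction hypothesis on $n$) would give $\Ao\subseteq p_t(A;\y)\subseteq f(A;\y)$ as soon as $p_t\ne 0$, contradicting our standing assumption. Hence $p_t=0$, completing the induction on $t$.

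With every $p_t$ vanishing, linear independence of the monomials $\left(X_{\sigma(1)}\ldots X_{\sigma(n)}\right)^{\bbb'}$ (Proposition \ref{lin neodvisnost}) converts the vanishing, for each $\sigma\in S_n$, each $t\in\N$, and each $\bbb'\in\BBB{r+t}$, into the linear system
\[
  \sum_{i=1}^n \sum_{\ccc\in\CCC{\sigma}{t}{i}} \skal{\sigma}{\ccc}{i}\,\lam{\sigma}{\bbb'-\ccc}{i} \;=\; 0.
\]
This is exactly the hypothesis of Proposition \ref{grozna trditev}, applied for each fixed $\sigma$ with $\mmt{\bbb'}{i}=\lam{\sigma}{\bbb'}{i}$, and it forces $\lam{\sigma}{\bbb}{i}=0$ throughout, contradicting $f\ne 0$. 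The one delicate point to verify is that the combinatorics of Lemma \ref{normalna oblika} produce exactly the system analyzed in Section \ref{s3}; the hard work has been absorbed into the preceding sections, so once this index bookkeeping is checked, all the machinery fits together without further difficulty.
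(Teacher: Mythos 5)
Your proposal is correct and follows essentially the same route as the paper's own proof: outer induction on $n$ reducing to Lemma \ref{baza} and Lemma \ref{korak}, then a secondary induction on $t$ using Lemma \ref{normalna oblika} to show that each $p_t=\sum_{\sigma,\bbb,i}\lam{\sigma}{\bbb}{i}\pol{\sigma}{\bbb}{i}{t}$ vanishes (the collapse of the triangular expansion of $\h{k}(f)$ is precisely the inner induction in the paper), followed by Proposition \ref{lin neodvisnost} and Proposition \ref{grozna trditev} to force all $\lam{\sigma}{\bbb}{i}=0$. The index bookkeeping you flag at the end does indeed check out, with the understanding that $\lam{\sigma}{\bbb'-\ccc}{i}$ is read as zero whenever $\bbb'-\ccc\notin\BBB{r}$ (the paper's $\mm{\sigma}{\bbb'-\ccc}{i}$).
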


\begin{proof}
	We proceed by induction on $n$, i.e., the number of noncommuting variables $X_1, \ldots, X_n$ involved in $f$. The case where $n=1$ was considered in Lemma \ref{baza}.
	
		Let $n >1$. Assume the theorem is true for all nonzero admissible partially commutative polynomials in $n-1$ noncommuting variables.
		In light of Lemma \ref{korak}, it suffices to prove that the theorem is true for every nonzero admissible partially commutative polynomial  of type two in
		$n$ noncommuting variables, i.e., a polynomial of the form
	\begin{align*}
	f = \sum_{\sigma \in S_n} \sum_{\bbb \in \BBB{r}} \sum_{i=1}^n \lam{\sigma}{\bbb}{i} \left( X_{\sigma(1)} \ldots X_{\sigma(n)} \right)^{\bbb, \sigma(i)}
	\end{align*}
	for some $\lam{\sigma}{\bbb}{i} \in F$, not all zero, and some $r \in \No$.

	Suppose that the image of $f$ does not contain $\Ao$. We claim that
	\begin{align} \label{polinomska enacba}
	\sum_{\sigma \in S_n} \sum_{\bbb \in \BBB{r}} \sum_{i=1}^n \lam{\sigma}{\bbb}{i} \pol{\sigma}{\bbb}{i}{k} = 0\text{,}
	\end{align}
	where $\pol{\sigma}{\bbb}{i}{k}$ are as in the preceding lemma, holds for every $k \in \N$.
	Let us prove this by induction on $k$. Since the image of $f$ does not contain $\Ao$, the same applies to the image of $\hi(f)$.
	We have
	\begin{align*}
	\hi(f) = \sum_{\sigma \in S_n} \sum_{\bbb \in \BBB{r}} \sum_{i=1}^n \lam{\sigma}{\bbb}{i} \hi \left( \left( X_{\sigma(1)} \ldots X_{\sigma(n)} \right)^{\bbb, \sigma(i)} \right) \text{,}
	\end{align*}
	which is, by Lemma \ref{normalna oblika},  equal to
	\begin{align*}
	\hi(f) = \sum_{\sigma \in S_n} \sum_{\bbb \in \BBB{r}} \sum_{i=1}^n \lam{\sigma}{\bbb}{i} \pol{\sigma}{\bbb}{i}{1} \text{.}
	\end{align*}
	Since $\pol{\sigma}{\bbb}{i}{1}=\left( X_{\sigma(1)} \ldots X_{\sigma(n)} \right)^{\bbb + \ccc}$ and, for $\bbb \in \BBB{r}$ and $\ccc \in \CCC{\sigma}{1}{i} = \{ \e{\sigma(i)}\}$, the sequence $\bbb + \ccc$ is an element of $\BBB{r+1}$, $\hi(f)$ is an admissible partially commutative polynomial of type one in $n$ noncommuting variables. Since the image of $\hi(f)$ does not contain $\Ao$,  $\hi(f)=0$ by Lemma \ref{korak}. This proves \eqref{polinomska enacba} for $k=1$.

	Now, let $k > 1 $ and assume that \eqref{polinomska enacba} holds for all positive integers smaller than $k$. Since the image of $f$ does not contain $\Ao$, the same applies to the image of $\h{k}(f)$.
	We have
	\begin{align*}
	\h{k}(f) = \sum_{\sigma \in S_n} \sum_{\bbb \in \BBB{r}} \sum_{i=1}^n \lam{\sigma}{\bbb}{i} \h{k}\left( \left( X_{\sigma(1)} \ldots X_{\sigma(n)} \right)^{\bbb, \sigma(i)} \right) \text{,}
	\end{align*}
	which is, by Lemma \ref{normalna oblika},  equal to
	\begin{align*}
	\h{k}(f) &= \sum_{\sigma \in S_n} \sum_{\bbb \in \BBB{r}} \sum_{i=1}^n \lam{\sigma}{\bbb}{i} \sum_{t=1}^k {k \choose t} \pol{\sigma}{\bbb}{i}{t} \Y^{k-t} \\
	&= \sum_{t=1}^k {k \choose t} \left( \sum_{\sigma \in S_n} \sum_{\bbb \in \BBB{r}} \sum_{i=1}^n \lam{\sigma}{\bbb}{i} \pol{\sigma}{\bbb}{i}{t} \right) \Y^{k-t} \text{.}
	\end{align*}
The expression in parenthesis is equal to the left-hand side of  \eqref{polinomska enacba}. By the induction hypothesis,  we have
	\begin{align*}
	\h{k}(f) &={k \choose k} \left( \sum_{\sigma \in S_n} \sum_{\bbb \in \BBB{r}} \sum_{i=1}^n \lam{\sigma}{\bbb}{i} \pol{\sigma}{\bbb}{i}{k} \right) \Y^{k-k} \\
	&=\sum_{\sigma \in S_n} \sum_{\bbb \in \BBB{r}} \sum_{i=1}^n \lam{\sigma}{\bbb}{i} \pol{\sigma}{\bbb}{i}{k}\text{.}
	\end{align*}
	Since $\pol{\sigma}{\bbb}{i}{k}$ is a linear combination of the polynomials $\left( X_{\sigma(1)} \ldots X_{\sigma(n)} \right)^{\bbb + \ccc}$ and, for $\bbb \in \BBB{r}$ and $\ccc \in \CCC{\sigma}{k}{i}$, the sequence $\bbb + \ccc$ is an element of $\BBB{r+k}$, $\h{k}(f)$ is an admissible partially commutative polynomial of type one in $n$ noncommuting variables.
	Since the image of $\h{k}(f)$ does not contain $\Ao$,  $\h{k}(f)=0$ by  Lemma \ref{korak}.
	This concludes our induction step. Thus, we have proven \eqref{polinomska enacba} for every $k \in \N$.

Using the definition of $\pol{\sigma}{\bbb}{i}{k}$ in \eqref{polinomska enacba} we obtain
	\begin{align*}
	\sum_{\sigma \in S_n} \sum_{\bbb \in \BBB{r}} \sum_{i=1}^n \sum_{\ccc \in \CCC{\sigma}{k}{i}} \lam{\sigma}{\bbb}{i} \skal{\sigma}{\ccc}{i} \left( X_{\sigma(1)} \ldots X_{\sigma(n)} \right)^{\bbb + \ccc} = 0 \text{,}
	\end{align*}
	that is,
	\begin{align} \label{lepa enacba}
	\sum_{\sigma \in S_n} \sum_{i=1}^n \sum_{\ccc \in \CCC{\sigma}{k}{i}} \skal{\sigma}{\ccc}{i}\sum_{\bbb \in \BBB{r}} \lam{\sigma}{\bbb}{i} \left( X_{\sigma(1)} \ldots X_{\sigma(n)} \right)^{\bbb + \ccc} = 0 \text{.}
	\end{align}
By changing the index of summation to $\bbb' =  \bbb + \ccc$, we have 
	\[
	\sum_{\bbb \in \BBB{r}} \lam{\sigma}{\bbb}{i} \left( X_{\sigma(1)} \ldots X_{\sigma(n)} \right)^{\bbb+ \ccc} =\sum_{\bbb' \in \BBB{r} + \ccc} \lam{\sigma}{\bbb' - \ccc}{i} \left( X_{\sigma(1)} \ldots X_{\sigma(n)} \right)^{\bbb'} \text{.}
	\]
	Since $\BBB{r}+ \ccc$ is a subset of $\BBB{r+k}$, we further have
	\begin{align} \label{lep zapis} 	\sum_{\bbb \in \BBB{r}} \lam{\sigma}{\bbb}{i} \left( X_{\sigma(1)} \ldots X_{\sigma(n)} \right)^{\bbb+ \ccc} =
	\sum_{\bbb' \in \BBB{r+k}} \mm{\sigma}{\bbb' - \ccc}{i} \left( X_{\sigma(1)} \ldots X_{\sigma(n)} \right)^{\bbb'} \text{,}
	\end{align}
	where
	\[
	\mm{\sigma}{\bbb}{i} = \begin{cases}\lam{\sigma}{\bbb}{i},~ \bbb \in \BBB{r} \\ 0,~\bbb \not\in \BBB{r} \end{cases} \text{.}
	\]
Using \eqref{lep zapis} in \eqref{lepa enacba} we get
	\begin{align*}
	\sum_{\sigma \in S_n} \sum_{i=1}^n \sum_{\ccc \in \CCC{\sigma}{k}{i}} \skal{\sigma}{\ccc}{i}\sum_{\bbb' \in \BBB{r+k}} \mm{\sigma}{\bbb' - \ccc}{i} \left( X_{\sigma(1)} \ldots X_{\sigma(n)} \right)^{\bbb'} = 0 \text{,}
	\end{align*}
	or, equivalently,
	\begin{align*}
	\sum_{\sigma \in S_n} \sum_{\bbb' \in \BBB{r+k}} \left( \sum_{i=1}^n \sum_{\ccc \in \CCC{\sigma}{k}{i}} \skal{\sigma}{\ccc}{i} \mm{\sigma}{\bbb' - \ccc}{i} \right) \left( X_{\sigma(1)} \ldots X_{\sigma(n)} \right)^{\bbb'} = 0 \text{.}
	\end{align*}
	By Proposition \ref{lin neodvisnost}, 
	\[
	\sum_{i=1}^n \sum_{\ccc \in \CCC{\sigma}{k}{i}} \skal{\sigma}{\ccc}{i} \mm{\sigma}{\bbb' - \ccc}{i} = 0
	\]
	for every $\sigma \in S_n$, every $k \in \N$, and every $\bbb' \in \BBB{r+k}$.
Now, Proposition \ref{grozna trditev} tells us that each $\mm{\sigma}{\bbb'- \ccc}{i}$ is zero, and consequently, each $\lam{\sigma}{\bbb}{i}$ is zero. We have thus arrived at a contradiction to the assumption that $f\ne 0$.
\end{proof}

Finally, we point out that since multilinear noncommutative polynomials are special examples of admissible partially commutative polynomials (see Remark \ref{opomba21}), Theorem \ref{t} follows immediately from Theorem \ref{mt}.

\section*{Acknowledgment}

The author would like to thank his supervisor Matej Brešar for his constant guidance, support, and tireless efforts in improving this paper.

\end{document}